\documentclass[11pt]{amsart}
%******************
% les paquets !
%******************
\usepackage[utf8]{inputenc}
\usepackage[T1]{fontenc}
\usepackage[frenchb,english]{babel} 
\usepackage{textcomp}
\usepackage{amsmath,amssymb}
\usepackage{amsthm}
\usepackage{lmodern}

\usepackage[a4paper,margin=3cm]{geometry}

\usepackage{graphicx}             
\usepackage{xcolor}               
\usepackage{microtype}          

\usepackage{hyperref}
\hypersetup{pdfstartview=XYZ}

\usepackage{bbm}
\usepackage{mathrsfs}
\usepackage{subfig}

\title{Rescaled bipartite planar maps converge to the Brownian map}
\author{C\'eline Abraham}
\date{July 2014}
\address{D\'epartement de math\'ematiques, Universit\'e Paris-Sud, 91405 ORSAY Cedex, FRANCE}
\email{celine.abraham@math.u-psud.fr}
\newcommand{\Z}{\mathbbm{Z}}
\newcommand{\N}{\mathbbm{N}}
\newcommand{\R}{\mathbbm{R}}

\def\build#1_#2^#3{\mathrel{
\mathop{\kern 0pt#1}\limits_{#2}^{#3}}}

\newtheorem{theorem}{Theorem}
\newtheorem{proposition}[theorem]{Proposition}

\newtheorem{lemma}[theorem]{Lemma}
\newtheorem{remark}[theorem]{Remark}

\begin{document}
\maketitle
%\tableofcontents
%\newpage

\begin{abstract}
For every integer $n\geq 1$, we consider a random planar map $\mathcal{M}_n$
which is uniformly distributed over the class of all rooted 
bipartite planar maps with $n$ edges. We prove that the vertex set 
of $\mathcal{M}_n$ equipped with the graph distance rescaled by the factor $(2n)^{-1/4}$
converges in distribution, in the Gromov-Hausdorff sense, to the Brownian map.
This complements several recent results giving the convergence of various classes of random planar maps
to
the Brownian map.
\end{abstract}

\section{Introduction} 

Much attention has been given recently to the convergence of large random planar maps viewed as 
metric spaces to the continuous random metric space known as the Brownian map. See in
particular \cite{albenque,quadwithnopending,uniqueness,brownianmapmiermont}. 
The main goal of the present work is to provide another interesting example of these 
limit theorems, in the case of bipartite planar maps with a fixed number of edges. 

Recall that a planar map is a proper embedding of a finite connected graph in the two-dimensional sphere, viewed up to orientation-preserving homeomorphisms of the sphere. The faces of the map are the connected components of the complement of edges. The degree of a face is the number of edges incident to it, with the convention that, if both sides of an edge are incident to the same face, then this edge is counted twice in the degree of the face. 
A planar map is rooted if there is a distinguished oriented edge, which is called the root edge. 

We consider only bipartite planar maps in the present work. A planar map is bipartite if its vertices can be colored with two colors, in such a way that two vertices that have the same color are not connected by an edge (in particular,
there are no loops). This is equivalent to the property  that all faces of the map have an even degree. 

If $M$ is a planar map, the vertex set of $M$ is denoted by $V(M)$, and the usual graph
distance on $V(M)$ is denoted by $d_{\rm{gr}}^M$.
Let $\mathbf{M}^b_n$ stand for the set of all  rooted  bipartite maps with $n$ edges.

\begin{theorem}
\label{cvgcarte} 
For every $n\geq 1$,
let $\mathcal{M}_n$ be uniformly distributed over $\mathbf{M}^b_n$. Then,
$$\left(V(\mathcal{M}_n) , 2^{-1/4} n^{-1/4} d_{\rm{gr}}^{\mathcal{M}_n}\right)  \build{\longrightarrow}_{n \to \infty}^{(d)} (\mathbf{m}_{\infty}, D^*) $$
where $(\mathbf{m}_{\infty}, D^*)$ is the Brownian map. The convergence holds in distribution in the space  $(\mathbb{K}, d_{GH})$, 
where  $\mathbb{K}$ is the set of all isometry classes of compact metric spaces and $d_{GH}$ is the Gromov-Hausdorff distance. 
\end{theorem}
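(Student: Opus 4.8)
The plan is to encode $\mathcal{M}_n$ by a labeled tree through the Bouttier--Di Francesco--Guitter bijection and to transfer convergence from the tree side, where the scaling limit is the Brownian snake, to the map side.

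\textbf{Step 1 (Encoding).} First I would pass from rooted to rooted-and-pointed maps: distinguishing a vertex $v_*$ uniformly among the $|V(\mathcal{M}_n)|$ vertices multiplies the uniform weight by $|V(\mathcal{M}_n)|$, a factor controlled a posteriori that does not affect the Gromov--Hausdorff limit. The Bouttier--Di Francesco--Guitter bijection then identifies rooted pointed bipartite maps with $n$ edges with labeled mobiles having $n$ edges, i.e.\ two-type plane trees whose vertices are colored white and black and alternate along edges, carrying integer labels $\ell$ on the white vertices, normalized so that the root white vertex has label $0$ and so that around each black vertex the labels of consecutive white neighbors, read in cyclic order, decrease by at most $1$. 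White vertices correspond to the non-pointed vertices of the map, black vertices to its faces (a face of degree $2k$ giving a black vertex of degree $k$), and, crucially, $d_{\rm{gr}}^{\mathcal{M}_n}(v,v_*) = \ell(v) - \min \ell + 1$ for every white vertex $v$. Since all maps with $n$ edges receive equal weight and $\sum_{\mathrm{faces}} \deg(f)/2 = n$ is fixed, the weight $\prod_f q_{\deg(f)/2}$ with the \emph{geometric} sequence $q_k = \beta^{k}$ is constant over $\mathbf{M}^b_n$; hence the induced law on mobiles is that of a two-type Galton--Watson tree with weights $q_k=\beta^k$, conditioned to have $n$ edges and equipped with uniform admissible labels.

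\textbf{Step 2 (Criticality and the tree limit).} Choosing $\beta$ to be the critical value in the sense of Marckert--Miermont, the offspring law $\mu$ of the associated Galton--Watson tree is critical with finite variance (the geometric weights in fact yield exponential moments), so the conditioned-tree invariance principles apply. I would invoke the convergence of multitype conditioned spatial Galton--Watson trees: the contour function of the mobile, rescaled in space by $n^{-1/2}$, converges to the normalized Brownian excursion $\mathbf{e}$ coding the CRT, while the label process, rescaled by $n^{-1/4}$, converges jointly to the head of the Brownian snake $Z$ driven by $\mathbf{e}$. The numerical constants, computed from the variance of $\mu$ and from the variance of the label increments around a black vertex, combine to give exactly the factor $2^{-1/4}$ in the statement.

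\textbf{Step 3 (From labels to distances).} Let $(v_n(s))_{s\in[0,1]}$ be the white vertices visited in the contour exploration of the mobile, with time rescaled to $[0,1]$, and set $D_n(s,t) = (2n)^{-1/4}\,d_{\rm{gr}}^{\mathcal{M}_n}(v_n(s),v_n(t))$. The exact distance-to-$v_*$ formula yields lower bounds of the form $d_{\rm{gr}}^{\mathcal{M}_n}(v_n(s),v_n(t)) \ge |\ell(v_n(s))-\ell(v_n(t))|$, while the explicit reconstruction of the map from the mobile furnishes matching cactus-type upper bounds
\[
d_{\rm{gr}}^{\mathcal{M}_n}(v_n(s),v_n(t)) \;\le\; \ell(v_n(s)) + \ell(v_n(t)) - 2\max\bigl(\min\nolimits_{[s,t]}\ell,\ \min\nolimits_{[t,s]}\ell\bigr) + 2,
\]
the minima being taken over the two contour arcs. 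Combined with Step 2, these bounds give tightness of the laws of $\bigl(V(\mathcal{M}_n),(2n)^{-1/4} d_{\rm{gr}}^{\mathcal{M}_n}\bigr)$ in $(\mathbb{K},d_{GH})$ and show that, along any subsequence, $D_n$ converges to a random pseudo-metric $D$ on the CRT squeezed between two explicit functionals of $(\mathbf{e},Z)$.

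\textbf{Step 4 (Identification --- the main obstacle).} The delicate point is that the two bounds on $D$ do not coincide, so the subsequential limit cannot be read off directly from $(\mathbf{e},Z)$. To show that every subsequential limit equals the Brownian map $(\mathbf{m}_\infty,D^*)$, I would appeal to Le Gall's uniqueness and universality result: once the encoding mobile is shown in Steps 1--2 to converge to the Brownian snake with a critical, finite-variance offspring law, the limiting pseudo-metric $D$ is forced to be $D^*$, independently of the fine structure of $\mu$. This identification is where the real work lies; the remainder is precisely the verification, carried out above, that uniform bipartite maps with $n$ edges fall within the scope of that universality, the only genuinely class-specific inputs being the critical geometric weight sequence $q_k=\beta^k$ and the resulting scaling constant $2^{-1/4}$.
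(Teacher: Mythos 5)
Your overall architecture (BDG encoding, convergence of the encoding mobile to the Brownian snake, cactus bounds, identification of subsequential limits via Le Gall's re-rooting/uniqueness machinery, and a depointing step) matches the paper's. But your Step 2 contains a genuine gap, and it is precisely where the paper's real work lies. You propose to ``invoke the convergence of multitype conditioned spatial Galton--Watson trees'' of Marckert--Miermont and Miermont. Those invariance principles condition on the number of vertices of \emph{one type} (e.g.\ the black vertices, i.e.\ the faces of the map); here the conditioning is on the number of edges of the mobile, i.e.\ on the \emph{total} number of vertices of the two-type tree, and no existing theorem covers that conditioning --- the paper stresses this explicitly, pointing to the remarks in \cite[p.1682]{MM} that the methods of \cite{MM,Mtrees} do not extend easily to conditioning on the total size. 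The whole of Section \ref{cvgarbre} of the paper exists to close this gap: it introduces a modified Lukasiewicz path $(Y_k)$ for the two-type tree, uses Kemperman's formula and a local limit theorem to write $P(N=n\mid \mathcal{F}_{\lceil(1-\delta)n\rceil})$ as an explicit function of $Y_{\lceil(1-\delta)n\rceil}$, proves a moderate-deviations estimate comparing $Y$ with the white contour function, applies Miermont's results only under the much weaker conditioning $\{N\geq an\}$ (which does follow from the unconditioned forest convergence), and concludes by an absolute-continuity relation between the excursion conditioned to have duration at least $1-\delta$ and the normalized excursion. Without an argument of this type, your Step 2 --- hence the constant $2^{-1/4}$ and everything downstream --- is unsupported.

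Two smaller points. In Step 1, the size-biasing factor $\mathrm{Card}\,V(\mathcal{M}_n)$ is not controlled ``a posteriori'' for free: the paper needs an exponential concentration estimate for the number of vertices around $2n/3$ (Lemma \ref{majoexpo}) to show that the uniform rooted law and the de-pointed rooted-and-pointed law are asymptotically equivalent in total variation (Proposition \ref{variationtotale}); some quantitative control of this kind is unavoidable. In Step 4, there is no black-box ``universality theorem'' in \cite{uniqueness} applicable to a new class of maps; what exists is a method --- the re-rooting argument of \cite[Section 8.3]{uniqueness} combined with the law of $D^*(p_{\mathbf{e}}(X),p_{\mathbf{e}}(Y))$ from \cite[Corollary 7.3]{uniqueness} --- and one must carry it out for the class at hand: re-root $\mathcal{M}_n^{\bullet}$ at a uniformly chosen corner, check that the re-rooted map is again uniform on $\mathbf{M}_n^{b\bullet}$, and deduce $D(X,Y)\build{=}_{}^{(d)} Z_X-\min\{Z_s:0\leq s\leq 1\}$, which together with $D\leq D^*$ and the known law of $D^*$ forces $D=D^*$. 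This adaptation is routine given your Steps 1--3, but it is an argument to be made, not a theorem to be cited.
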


A brief presentation of the Brownian map will be given in Section \ref{CvgCarte} below. See \cite{uniqueness}
and the references therein for more information about this random compact metric space.

As mentioned above, several limit theorems analogous to Theorem \ref{cvgcarte} have been
proved for other classes of random planar maps. The case of $p$-angulations,
which are planar maps where all faces have the same degree $p$, has received particular
attention. Le Gall \cite{uniqueness} proved 
the convergence in distribution of rescaled $p$-angulations with a fixed number of faces to the
Brownian map, both when $p=3$ (triangulations) and when $p\geq 4$ is even. The case 
of quadrangulations ($p=4$) has been treated independently by Miermont \cite{brownianmapmiermont}.
More recently, similar results have been obtained for random planar maps 
with local constraints:
Beltran and Le Gall \cite{quadwithnopending} proved the convergence to the Brownian map for 
quadrangulations with no pendant vertices, and Addario-Berry and Albenque \cite{albenque} discussed similar results
for simple triangulations or quadrangulations, where there are no loops or multiple edges. 

All these papers however deal with random planar maps conditioned to have a fixed
number of faces. In our setting, it would make no sense to
consider the uniform distribution over all bipartite planar maps with a given number of faces,
since there are infinitely many such planar maps. Similarly it would make no sense 
to condition on the number of vertices, and for this reason we consider conditioning
on the number of edges, which results in certain additional technical difficulties. 

In order to prove Theorem \ref{cvgcarte}, we first establish a similar result for planar maps that are both rooted and pointed (this means that, in addition to the root edge there is a distinguished vertex, which we call the origin of the map). 

As in several of the previously mentioned papers, the proof of this result relies on the combinatorial bijections of Bouttier, di Francesco and Guitter \cite{BDG} between (rooted and pointed) bipartite planar maps and certain labeled two-type plane trees. Let $\mathbf{M}^{b \bullet}_n$ denote the set of all rooted and pointed planar bipartite maps with $n$ edges, and let $\mathcal{M}_n^{\bullet}$ be uniformly distributed over $\mathbf{M}^{b \bullet}_n$. The random
tree associated with $\mathcal{M}_n^{\bullet}$ via the Bouttier, di Francesco, Guitter bijection is identified as a (labeled) two-type Galton-Watson tree
with explicit offspring distributions, conditioned to have a fixed total progeny (see Proposition \ref{loiarbre}
below). In order to prove the convergence to the Brownian map, an important technical step
is then to derive asymptotics for the contour and label functions associated with this conditioned tree (Theorem
\ref{cvgcontourlabel}).
Such asymptotics for conditioned two-type Galton-Watson trees have been discussed in \cite{MM} and
\cite{Mtrees}. However both these papers consider conditioning on the number 
of vertices of one type, which makes it easier to derive the desired asymptotics from the case of
usual (one-type) Galton-Watson trees. The fact that we are here conditioning on the total number
of vertices creates a significant additional difficulty, which we handle through an 
absolute continuity argument similar to the ones used in Section 6 of \cite{Ito}. A useful 
technical ingredient is a seemingly new definition of a ``modified'' Lukasievicz path associated with a two-type tree,
which might be of independent interest. This new definition is somehow related
to a bijection of Janson and Stef\'ansson \cite{J} between one-type and two-type trees.

As we were finishing the first version of the present article, we learnt of the very recent paper \cite{BJM},
which obtains a result similar to ours for {\it general} planar maps. The arguments of \cite{BJM} might also
be applicable to the bipartite case, but the methods seem quite different from the ones that are
presented here. 

We finally note the simple scaling constant $2^{-1/4}$ in Theorem \ref{cvgcarte}. As far as we know, this value is different from the ones already computed for other classes of maps. The analogous constant for uniform general maps with $n$ edges \cite{BJM} is $\left(9/8\right)^{1/4}$ and the one for uniform quadrangulations with $n$ edges ($n$ even) \cite{uniqueness,brownianmapmiermont}  is $\left( 9/4 \right)^{1/4}$.

The paper is organized as follows. Section 2 introduces our main notation and definitions, and 
recalls the key bijection of \cite{BDG}  between rooted and pointed bipartite maps and labeled
two-type trees. In Section 3, we identify
the distribution of the random two-type tree associated to a map uniformly distributed over $\mathbf{M}_n^{b \bullet}$, 
and we introduce its ``modified'' Lukasievicz path. Section 4 is devoted to the asymptotics of the contour and label functions coding the two-type tree.  Section 5 gives the
proof of the statement analogous to Theorem \ref{cvgcarte} for rooted and pointed maps. Finally, Section 6 explains how to derive Theorem \ref{cvgcarte} from the latter statement.

\section{Bipartite planar maps and trees}

\subsection{Trees}
\label{sec:trees}
We set $\N=\lbrace 1, 2, \dots \rbrace$ and by convention $\N^0= \lbrace \emptyset \rbrace$. We introduce the set $$ \mathcal{U}= \bigcup_{n=0}^{\infty} \N^n.$$
An element of $\mathcal{U}$ is a sequence $u=(u^1, \dots, u^n)$ of elements of $\N$, and we set $|u|=n$ so that $|u|$ represents the ``generation'' of $u$.
If $u=(u^1, \dots, u^n)$ and $v=(v^1, \dots, v^m)$ are two elements of $\mathcal{U}$, then $uv=(u^1,\dots,u^n,v^1,\dots,v^m)$ is the concatenation of $u$ and $v$. 
The mapping $\pi : \mathcal{U} \setminus \lbrace \emptyset \rbrace \rightarrow \mathcal{U}$ is defined by $\pi((u^1, \dots, u^n))=(u^1, \dots u^{n-1})$. One says that $\pi(u)$ is the parent of $u$, or that $u$ is a child of $\pi(u)$. 
A plane tree $T$ is a finite subset of $\mathcal{U}$ such that
\begin{enumerate}
\item[(i)] $\emptyset \in T$;
\item[(ii)] if $u \in T \setminus \lbrace \emptyset \rbrace$, then $\pi(u) \in T$;
\item[(iii)] for every $u \in T$, there exists an integer $k_u(T) \geq 0$ such that, for every $j \in \N$, $uj \in T$ if and only if $1 \leq j \leq k_u(T)$. 
\end{enumerate}
In (iii), the number $k_u(T)$ is interpreted as the number of children of $u$ in $T$. 
The size of a plane tree $T$ is $|T|=\# T-1$, which is the number of edges of $T$. 
We denote  the set of all plane trees by $\mathbf{A}$. 

Consider now a plane tree $T$ and $n= |T|$. We introduce the contour  sequence $(u_0,u_1,\ldots,u_{2n})$
of $T$, which is defined by induction as follows :  $u_0=\emptyset$ and for $i \in \lbrace 0, \dots 2n-1 \rbrace$, $u_{i+1}$ is either the first child of $u_i$ that has not appeared yet in the sequence $(u_0,\dots,u_i)$, or the parent of $u_i$ if all the children of $u_i$ already appeared in the sequence $(u_0, \dots, u_i)$. Note that $u_{2n}=\emptyset$ and that all vertices of $T$
appear in the sequence $(u_0,\ldots,u_{2n})$ (some appear more than once). 

The white vertices of a tree $T$ are all vertices $u$ such that $|u|$ is even and similarly the black vertices are all vertices such that $|u|$ is odd.   
We denote  the sets of white and black vertices of $T$  by $T^{0}$ and $T^1$ respectively.  

We will be interested in certain two-type Galton-Watson trees, which we briefly describe here. Let $(\mu_0,\mu_1)$ be a pair of probability distributions on $\Z_+$ with respective (finite)  means $m_0$ and $m_1$. We only consider pairs such that $\mu_0(1)+\mu_1(1)<2$ and $m_0m_1 \neq 0$. We say that $(\mu_0,\mu_1)$ is subcritical if $m_0m_1<1$ and critical if $m_0m_1=1$. Assume that the pair $(\mu_0, \mu_1)$ is critical or subcritical.  
A random tree $\xi$ whose distribution is specified by 
$$ P(\xi=T)= \prod_{u \in T^0} \mu_0(k_{T}(u)) \prod_{u \in T^1} \mu_1(k_{T}(u))\ ,\quad\forall T\in \mathbf{A}$$
is called a two-type Galton-Watson tree with offspring distributions $(\mu_0, \mu_1)$. Informally, white vertices have children according to the offspring distribution $\mu_0$ and black vertices have children according to $\mu_1$. 

We now introduce labeled trees. A labeled tree is a pair $(T, (\ell(u))_{u \in T^0})$ where $T$ is a plane tree and $(\ell(u))_{u \in T^0}$ is a collection of labels assigned to the white vertices of $T$, which must satisfy the following properties.
\begin{enumerate}
\item[(i)] For every $u\in T$, $\ell(u)\in\Z$. 
\item[(ii)] Let $v \in T^1$ and $k=k_v(T)$. Let $v_1=v1,\ldots, v_k=vk$ be the children 
of $v$ in $T$, and set also $v_0=v_{k+1}=\pi(v)$. Then, for every $i\in\{0,1,2,\ldots,k\}$, $\ell(v_{i+1})\geq \ell(v_i)-1$. 
\end{enumerate}
%Figure \ref{fig:twotypetree} shows an example of a labeled tree.
The number $\ell(u)$ is called the label of $u$. Property (ii) means that, if $v$ is a black vertex,
the labels $i$ and $j$ of two white vertices adjacent to $v$ and consecutive in clockwise order around $v$ satisfy $j \geq i-1$. 

We denote  the set of all labeled trees with $n$ edges by $\mathbf{T}_n$.

\begin{figure}[h]
\centering
\includegraphics[height=5cm]{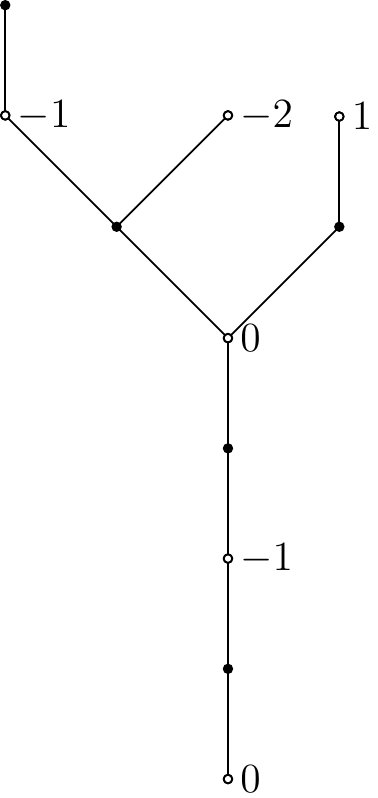}\qquad
\hbox{\includegraphics[width=5cm]{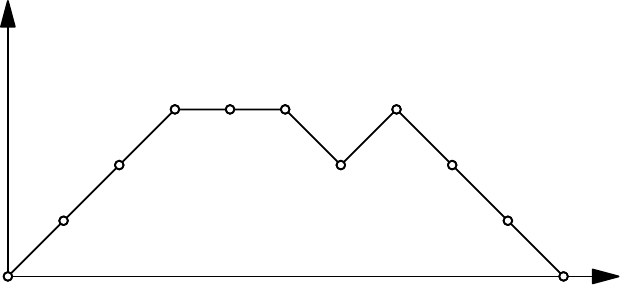}\vspace{1cm}}
\qquad
\includegraphics[width=5cm]{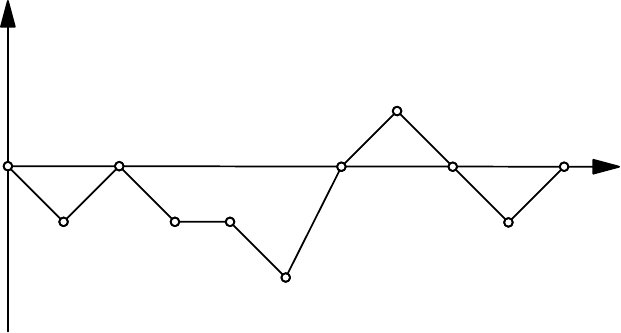}
\caption{A labeled tree $T$ with $n=10$ edges, the contour function $C^{T^0}$ and the label function $L^{T^0}$. }
\label{fig:twotypetree}
\end{figure}

A  labeled tree $(T, (\ell(u))_{u \in T^0})$  can be coded by a pair of functions. Recall that if $|T|=n$, $(u_0, \dots, u_{2n})$ is the contour sequence of $T$. Note that $u_i$ is white if $i$ is even and black if $i$ is odd. We define for $0 \leq i \leq 2n$,
$$C^{T}_i=|u_i|.$$
We extend $C^{T}$ to the real interval $[0,2n]$ by linear interpolation. The function $C^{T}$ is the contour function of the tree $T$. 
For  $0 \leq i \leq n$, set $v_i= u_{2i}$. The sequence $(v_0, \dots, v_n)$ is called the white contour sequence.  
We then set, for $0 \leq i \leq n$,
$$C^{T^0}_i=\frac{1}{2} |v_i|$$ and 
$$L^{T^0}_i=\ell(v_i).$$
We notice that  for $0 \leq i \leq n$ , we have $C^{T^0}_i= \frac{1}{2} C^{T}_{2i}$. 
We also extend both $C^{T^0}$ and $L^{T^0}$ to to the real interval $[0,n]$ by linear interpolation. 
The function $C^{T^0}$ is called the contour function of $T^0$ (or the white contour function) and $L^{T^0}$ is called the label function of $T^0$.  See Fig.1 for an example. It is easy to verify that the labeled tree $(T, (\ell(u))_{u \in T^0})$ is uniquely determined by the pair   $(C^{T}, L^{T^0})$ (on the other hand, the pair $(C^{T^0}, L^{T^0})$ does not give enough information to recover the tree).

\subsection{The Bouttier-Di Francesco-Guitter bijection}
\label{bijBDG}
In this section we describe the Bouttier-Di Francesco-Guitter bijection (BDG bijection) between $\mathbf{T}_n \times \lbrace 0,1\rbrace$ and $\mathbf{M}_n^{b \bullet}$. This construction can be found in \cite{BDG} and in \cite{uniqueness} in the particular case of $2p$-angulations. 
  
We start with a labeled tree $(T, (\ell(u))_{u \in T^0}) \in \mathbf{T}_n$ and $\epsilon \in \lbrace 0,1 \rbrace$.  
As above, $(v_0, \dots, v_n)$ stands for the white contour sequence of $T$. 
We suppose that the tree $T$ is represented in the plane 
in the (obvious) way as suggested by Fig.1. A corner of $T$ is a sector around a vertex of $T$ delimited by two consecutive edges in clockwise order. Each corner is given the label of its associated vertex. We note that every $i\in\{0,1,\ldots,n-1\}$
corresponds to exactly one corner of the vertex $v_i$ (if we move around the tree in clockwise order, the successive white
vertices that are visited are $v_0,v_1,\ldots,v_{n-1},v_n=v_0$ and each visit but the last one corresponds to a new corner), and we will abuse terminology by calling this corner the corner $v_i$.

We then add an extra vertex $\partial$ outside the tree $T$, and  
we construct a planar map $M^{\bullet}$, whose vertex set is the union of $T^0$ 
and of the extra vertex $\partial$, as follows: For every $i \in \lbrace 0, \dots, n-1 \rbrace$,
\begin{itemize}
\item if $\ell(v_i)= \min \lbrace \ell(v), v \in T^0 \rbrace$, then we draw an edge of $M^{\bullet}$ between the corner $v_i$ and $\partial$;
\item if $\ell(v_i) > \min \lbrace \ell(v), v \in T^0 \rbrace$, then we draw an edge of $M^{\bullet}$ between the corner $v_i$ and the corner $v_j$, where $j=\min\{k>i:\ell(v_k)=\ell(v_i)-1\}$ if $\{k>i:\ell(v_k)=\ell(v_i)-1\}$ is nonempty, $j=\min\{k\geq 0: \ell(v_k)=\ell(v_i)-1\}$ otherwise. 
\end{itemize}
Thanks to property (ii) of the labels, it is possible to achieve this construction in such a way that edges do not intersect (except at their ends) and do not cross the edges of the tree. 
The collection of all edges drawn in the preceding construction gives a bipartite planar map $M^{\bullet}$ with $n$ edges. We then declare that the vertex $\partial$ is the distinguished vertex of this map and that its root edge is the edge obtained at step $i=0$ of the preceding construction. 
 The parameter $\epsilon$ gives the orientation of this root edge: the root vertex is $\emptyset$ if and only if  $\epsilon=0$.  
In this way we get a pointed and rooted bipartite planar map $M^{\bullet}$. See Fig.2 for an example with $\epsilon=0$.
 
\begin{figure}[h]
\centering
\includegraphics{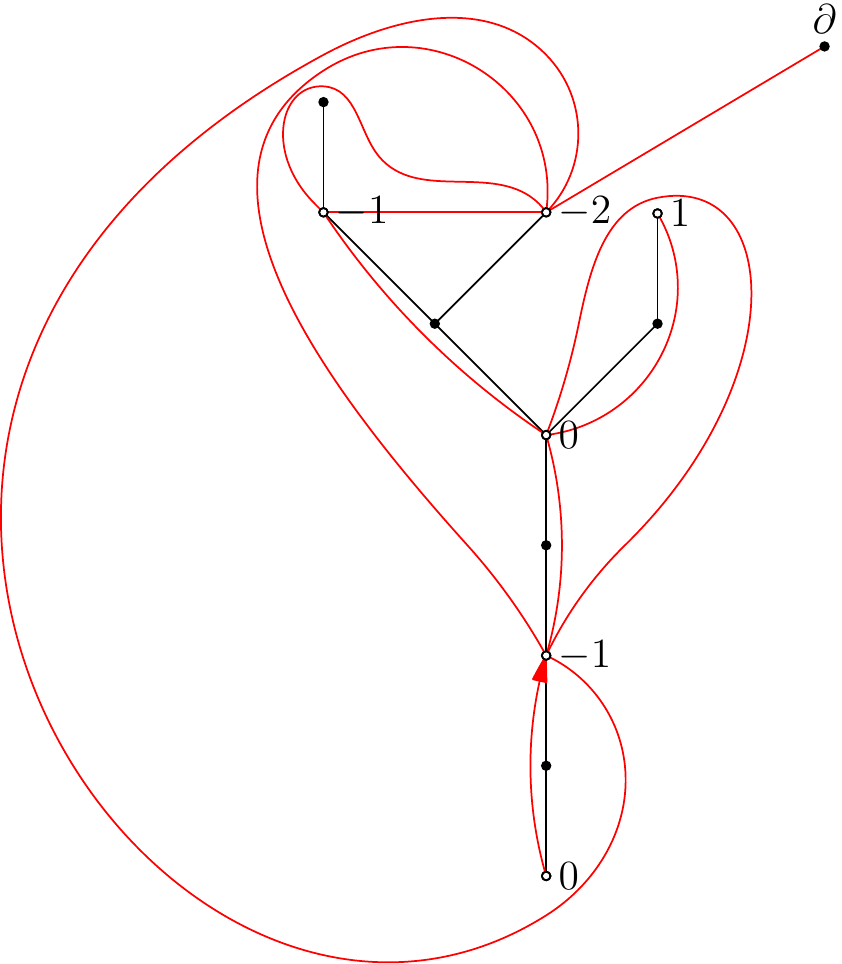}
\caption{The  labeled tree $T$ of Fig.1 and the associated rooted and pointed bipartite map $M^{\bullet}$. }
\label{fig:bdg}
\end{figure}

The preceding construction yields a bijection from $\mathbf{T}_n \times \lbrace 0,1 \rbrace$ onto $\mathbf{M}^{b \bullet}_n$, which is called the Bouttier-Di Francesco-Guitter (BDG) bijection. 
In this bijection, white vertices of the tree $T$ are identified with vertices of the map $M^{\bullet}$ other than $\partial$, and moreover graph distances (in $M^{\bullet}$) from $\partial$ are related to labels on $T$ by the formula
\begin{equation}
\label{lienlabeldistance} 
d_{\text{gr}}^{M^{\bullet}}(\partial, u)= \ell(u)- \min \lbrace \ell(v), v \in T^0 \rbrace +1,
\end{equation}
for every $u \in T^0$. 
There is no such expression for $d_{\text{gr}}^{M^{\bullet}}(u,v)$ when $u$ and $v$ are arbitrary vertices of $M^{\bullet}$, but
the following bound will be very useful. Let $i, j \in \lbrace 0, \dots, n\rbrace$ such that $i<j$. Then,
\begin{equation}
\label{majolabeldistance}
d_{\text{gr}}^{M^{\bullet}}(v_i, v_j) \leq \ell(v_i) + \ell(v_j)-2 \max \lbrace \min \lbrace \ell(v_k), i \leq k \leq j \rbrace, \min \lbrace \ell(v_k), j \leq k \leq i+n \rbrace \rbrace +2,
\end{equation} 
where we made the convention that $v_{n+k}=v_k$ for $0 \leq k \leq n$.
The proof of this bound is easily adapted from \cite[Lemma 3.1]{topostructure}. 

\section{Random trees and their contour functions}
\subsection{The tree associated with a map chosen uniformly in $\mathbf{M}^{b \bullet}_n$.}
\label{lois}
Let $\mathcal{M}_n^{\bullet}$ be uniformly distributed over the set $\mathbf{M}^{b \bullet}_n$, as in Section 1. We let $(\mathcal{T}_n,(\ell_n(u))_{u \in \mathcal{T}_n^0})$ be the random labeled tree  associated with $\mathcal{M}_n^{\bullet}$ by the  previously described BDG bijection. The next proposition determines the distribution of this random tree. 

\begin{proposition}
\label{loiarbre}
Let $(\mu_0,\mu_1)$ be the pair of probability measures on $\mathbb{Z}_+$ defined by
$$\left\lbrace \begin{aligned}
                \mu_0(k)&=\frac{2}{3} \left(\frac{1}{3} \right)^k \\
                \mu_1(k)&= \frac{3}{8} \binom{2k+1}{k} \left(\frac{3}{16} \right)^k  \\                 
             \end{aligned} \right.$$ 
for every integer $k \geq 0$. The mean of $\mu_0$ is $1/2$ and the mean of $\mu_1$ is $2$, so that
the pair $(\mu_0,\mu_1)$ is critical. 

Then the random tree $\mathcal{T}_n$ is a two-type Galton-Watson tree with offspring distributions $(\mu_0, \mu_1)$ conditioned to have $n$ edges. 
Furthermore, conditionally given $\mathcal{T}_n$, the labels $(\ell_n(u))_{u \in \mathcal{T}_n^0}$ are uniformly distributed over all admissible labelings.               
\end{proposition}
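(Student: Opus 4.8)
The plan is to exploit the fact that the BDG bijection of Section~\ref{bijBDG} transports the uniform measure on $\mathbf{M}^{b \bullet}_n$ to the uniform measure on $\mathbf{T}_n \times \lbrace 0,1 \rbrace$. Since the orientation parameter $\epsilon$ plays no role in the underlying labeled tree, it will follow that $(\mathcal{T}_n, (\ell_n(u))_{u \in \mathcal{T}_n^0})$ is uniformly distributed over $\mathbf{T}_n$. Everything then reduces to a counting problem: for a fixed plane tree $T$ with $n$ edges I would count the number $N(T)$ of admissible labelings (normalizing $\ell(\emptyset)=0$, which is the convention making the BDG correspondence a genuine bijection and rendering this count finite), and then show both that $N(T)$ factorizes in the right way and that the resulting tree law is exactly the conditioned Galton-Watson law.

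The key step is a local count of labelings around each black vertex. Fix $v \in T^1$ with $k = k_T(v)$ children, and write its white neighbors in clockwise order as $v_0 = \pi(v), v_1, \dots, v_k$, with $v_{k+1} = v_0$. Property (ii) of the labels is exactly that the increments $d_i = \ell(v_{i+1}) - \ell(v_i)$ satisfy $d_i \geq -1$ for $i = 0, \dots, k$, while automatically $\sum_{i=0}^k d_i = 0$. Substituting $e_i = d_i + 1 \geq 0$ turns this into counting nonnegative integer solutions of $\sum_{i=0}^k e_i = k+1$, of which there are $\binom{2k+1}{k}$. Since the label of each white vertex is determined by the label of its white grandparent together with the increment data at the intervening black vertex, these choices are made independently at the black vertices in top-down fashion, and the count does not depend on the labels coming from above. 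Hence
$$ N(T) = \prod_{v \in T^1} \binom{2k_T(v)+1}{k_T(v)}, $$
and this independence from the labels above immediately yields the second assertion: conditionally on $\mathcal{T}_n = T$, the labels are uniform over the $N(T)$ admissible labelings.

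It remains to identify the law of $\mathcal{T}_n$, which by the above is proportional to $N(T)$ on $n$-edge plane trees. I would compare this with the Galton-Watson weight $\prod_{u \in T^0}\mu_0(k_T(u)) \prod_{u \in T^1}\mu_1(k_T(u))$. Substituting the explicit $\mu_0,\mu_1$ and collecting exponents, the binomial coefficients in $\mu_1$ reproduce exactly $N(T)$, while the remaining factors combine into a power of $2$ and a power of $3$ governed only by $\#T^0$, $\#T^1$, and the exponent sums $\sum_{u \in T^0} k_T(u) = \#T^1$ and $\sum_{v \in T^1} k_T(v) = \#T^0 - 1$. Since $\#T^0 + \#T^1 = n+1$ is fixed, all these quantities depend on $n$ alone, so the prefactor is a constant $c_n$ independent of $T$. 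Thus $P(\xi = T) = c_n\, N(T)$ for every $n$-edge tree, and conditioning on $\lbrace |\xi| = n\rbrace$ gives precisely the law proportional to $N(T)$, i.e. the law of $\mathcal{T}_n$.

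Finally I would record the routine verifications that $(\mu_0,\mu_1)$ are genuine probability measures with the stated means: $\mu_0$ is geometric and yields $m_0 = 1/2$ at once, while for $\mu_1$ one uses the generating identity $\sum_{k \geq 0}\binom{2k+1}{k} x^k = \frac{1}{2x}\big((1-4x)^{-1/2} - 1\big)$, evaluated together with its derivative at $x = 3/16$ (where $1-4x = 1/4$), to obtain normalization and $m_1 = 2$. I expect the main obstacle to be not these computations but the clean identification of the local increment count as $\binom{2k+1}{k}$: this is the single point where the combinatorics of the labels produces the precise offspring law $\mu_1$, and getting the closed-loop constraint $\sum_i d_i = 0$ and its stars-and-bars count right is the crux of the argument.
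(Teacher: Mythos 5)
Your proof is correct and follows essentially the same route as the paper: both rest on the observation that $P(\mathcal{T}_n = T)$ is proportional to the number of admissible labelings $\prod_{v\in T^1}\binom{2k_T(v)+1}{k_T(v)}$, and both identify this with the two-type Galton--Watson weight by noting that the residual factors depend only on $N_0$ and $N_1$, hence only on $n$ since $N_0+N_1=n+1$. The sole difference is presentational: the paper works with a two-parameter family $(\nu_0,\nu_1)$ and derives the matching condition $(1-a)b=aB$ to locate $a=1/3$, $b=3/16$, whereas you verify the stated $(\mu_0,\mu_1)$ directly, while also filling in details the paper leaves to the reader (the stars-and-bars count behind $\binom{2k+1}{k}$, the conditional uniformity of labels, and the generating-function checks of normalization and means).
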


\begin{proof}
Clearly it is enough to determine the law of $\mathcal{T}_n$. We observe that, if $T$ is a plane tree and if $u$ is a black vertex of $T$ with $k$ children, there are $\binom{2k+1}{k}$ possible choices for the increments of labels of white vertices around $u$. Fix $a \in (0,1)$ and $b \in (0, 1/4)$, and set for every $k \geq 0$,
$$\left\lbrace \begin{aligned}
                \nu_0(k)&=(1-a) a^k \\
                \nu_1(k)&= B \binom{2k+1}{k} b^k  \\                 
             \end{aligned} \right.$$ 
where $B$ is determined by the requirement that $\nu_1$ is a probability measure on $\mathbb{Z}_+$:
 $$ B= \frac{2b \sqrt{1-4b}}{1-\sqrt{1-4b}}.$$  
Assume that $(\nu_0, \nu_1)$ is subcritical or critical. If $\theta$ is a two-type Galton-Watson tree with offspring distributions $(\nu_0, \nu_1)$, then, for every plane tree $T$ with $n$ edges,
$$P(\theta=T)= \prod_{u \in T^0} \nu_0(k_{T}(u)) \prod_{u \in T^1} \nu_1(k_{T}(u)).$$ 
Writing $N_0$, respectively $N_1$, for the number of white, respectively black, vertices of $T$, we get
$$\begin{aligned}
P(\theta=T) 
&= (1-a)^{N_0} a^{N_1} B^{N_1} b^{N_0-1} \prod_{u \in T^1}  \binom{2k_{T}(u)+1}{k_{T}(u)}   \\
&=\frac{1}{b} ((1-a)b)^{N_0} (aB)^{N_1} \prod_{u \in T^1}  \binom{2k_{T}(u)+1}{k_{T}(u)}. \\
\end{aligned}$$
On the other hand, the quantity $P(\mathcal{T}_n=T)$ is proportional to the number of possible labelings of $T$, so that
$$ P(\mathcal{T}_n=T)=c_n \prod_{u \in T^1}  \binom{2k_{T}(u)+1}{k_{T}(u)},$$
where $c_n$ is the appropriate normalizing constant. If $a$ and $b$ are such that
\begin{equation}
\label{ab}
(1-a)b=aB,
\end{equation}
noting that $N_0+N_1=n+1$, we see that $P(\mathcal{T}_n=T)$ coincides with
$P(\theta=T)$ up to a multiplicative constant that depends only on $n$, and it follows that 
\begin{equation}
\label{lawtreeconditioned}
P(\mathcal{T}_n=T)=P(\theta=T \,\big|\, |\theta|=n).
\end{equation}
The condition \eqref{ab} holds if 
$$ a= \frac{1}{3}, \  b= \frac{3}{16}.$$
Furthermore, for these values of $a$ and $b$, we can verify that the mean of $\nu_0$ is $1/2$ and the
mean of $\nu_1$ is $2$, so that the pair $(\nu_0,\nu_1)$ is critical. It then follows from the preceding
considerations and in particular from \eqref{lawtreeconditioned} that the law of $\mathcal{T}_n$ is as stated
in the proposition. \end{proof}

%We then compute the means $$p_0= \frac{a}{1-a}= \frac{1-\sqrt{1-4b}}{1+\sqrt{1-4b}} $$ of $\nu_0$, $$p_1= \frac{1}{1-\sqrt{1-4b}} \left( -1+ \frac{2b}{1-4b}+ \sqrt{1-4b} \right)$$ of $\nu_1$ and  the product $$p_0 p_1= -\frac{1}{2 \sqrt{1-4b}} + \frac{b}{(1-4b)^{3/2}} + \frac{1}{2}.$$  Hence with the previous choice of $a$ and $b$ we can verify that the pair $(\nu_0, \nu_1)$ is critical. 
%\end{proof}

% ETIQUETAGES pas expliqu\'es??? 
%To find $\mu_1$ we have to take into account the properties of the labels. Let us consider a black vertex $u$. By convention, $u$ is given the label $\ell(u)$ of its parent. 
%We denote by $u_1, \dots, u_k$ the children of $u$ (here $k=k_u(\mathcal{T}_n)$).
%We set $\ell(u_0)=\ell(u)=\ell(u_{k+1})$ by convention. Then we have for $1 \leq j \leq k+1$
%$$ \ell(u_j)-\ell(u_{j-1}) \in \mathbb{Z}_+ \cup \lbrace -1 \rbrace$$
%and
%$$ \sum_{j=1}^{k+1} \left( \ell(u_j)-\ell(u_{j-1}) \right)=0.$$
%It means that the sequence  $( \ell(u_j)-\ell(u_{j-1})+1)_{1 \leq j \leq k+1}$ is a sequence of $k+1$ integers whose sum is equal to $k+1$. There exist $\binom{2k+1}{k+1}=\binom{2k+1}{k}$ sequences that satisfy these properties. 

\begin{remark}
One can easily compute the respective variances $\sigma_0^2$ and $\sigma_1^2$ of the probability measures $\mu_0$ and $\mu_1$. For future reference, we record that 
$$\sigma_0^2=\frac{3}{4}\ , \ \sigma_1^2=\frac{15}{2}.$$ 
\end{remark}

\subsection{The white contour function and an associated random walk}

Consider a random labeled tree $(\mathcal{T}, (\ell(u))_{u\in \mathcal{T}})$, such that 
$\mathcal{T}$ is a two-type Galton-Watson tree
 with offspring distributions $(\mu_0,\mu_1)$ given by Proposition \ref{loiarbre}, and 
 conditionally on $\mathcal{T}$ the labels $(\ell(u))_{u\in \mathcal{T}}$
 are uniformly distributed among admissible labelings. Let $N$ denote the (random) number of edges of $\mathcal{T}$, and
write $(u_0, \dots, u_{2N})$ for the contour sequence of $\mathcal{T}$.

For every integer $k \geq 0$, we let the  $\sigma$-field $\mathcal{F}_k$ be
generated by the following random variables:
\begin{enumerate}
\item[$\bullet$] the quantity $k\wedge N$ and
the vertices $u_0,u_1, \dots, u_{2(k \wedge N)}$ of $\mathcal{T}$;
\item[$\bullet$] the labels $\ell(u_0),\ell(u_2),\ldots,\ell(u_{2(k\wedge N)})$ of the white vertices $u_0,u_2, \ldots, u_{2(k \wedge N)} $;
\item[$\bullet$] for every odd integer $i$  such that $0<i<2(k \wedge N)$, the quantity $k_{\mathcal{T}}(u_i)$ 
and the labels $\ell(u_ij)$, $1\leq j\leq k_{\mathcal{T}}(u_i)$, of the (white) children of the black vertex $u_i$.
\end{enumerate}

Fig.3 below gives a realization of the tree $\mathcal{T}$ and Fig.4 shows the information discovered by the $\sigma$-field $\mathcal{F}_k$ for $k=0,1,\ldots,5$. This information
should also include the labels of the white vertices that are successively revealed, but these labels are not shown here. 

\medskip
\begin{figure}[h]
\centering
\includegraphics{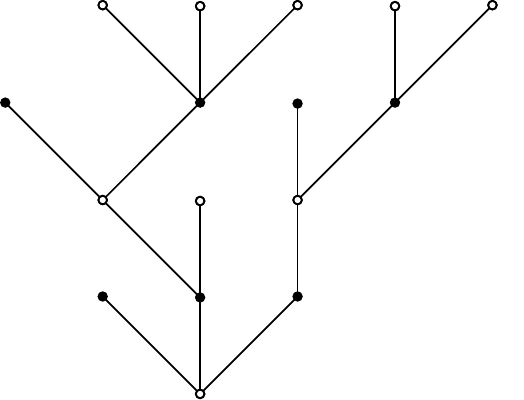}
\caption{A realization of the tree $\mathcal{T}$. }
\label{fig:tribuY-1}
\end{figure}

\begin{figure}[h]
\includegraphics{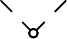}
\hspace{2mm}
\includegraphics{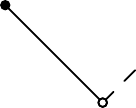}
\hspace{2mm}
\includegraphics{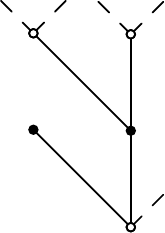}
\hspace{2mm}
\includegraphics{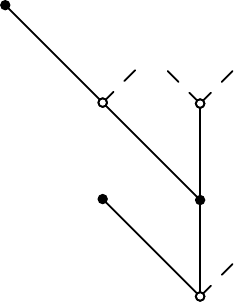}
\hspace{2mm}
\includegraphics{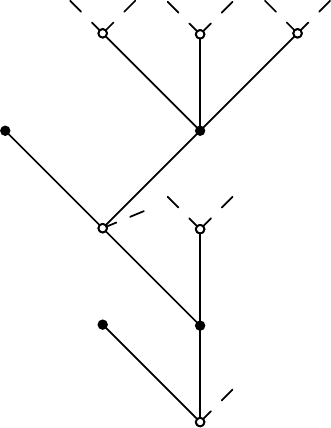}
\hspace{2mm}
\includegraphics{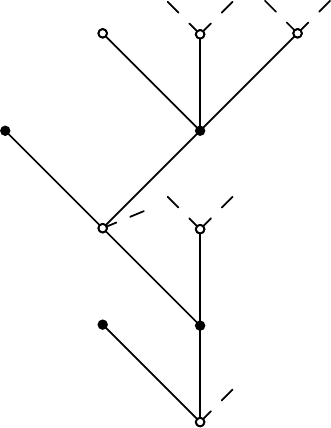}

\medskip
$\mathcal{F}_0\hspace{15mm} \mathcal{F}_1\hspace{18mm} \mathcal{F}_2\hspace{23mm} \mathcal{F}_3\hspace{24mm} \mathcal{F}_4\hspace{32mm} \mathcal{F}_5\hspace{15mm} $
\caption{The information about the tree $\mathcal{T}$ of Fig.3 given by the $\sigma$-field $\mathcal{F}_k$ for $k=0,1,\ldots,5$. The  dashed lines correspond to the ``active''
white vertices. In this example, 
$Y_0=Y_1=1$, $Y_2=Y_3=3$, $Y_4=6$, $Y_5=5$, etc. }
\label{fig:tribuY-2}
\end{figure}

We also introduce a random sequence $(Y_0,Y_1,\ldots Y_{N+1})$, which is defined by induction by setting $Y_0=1$ and, for every $0 \leq k \leq N$: 
\begin{itemize}
\item if $u_{2k}$ has at least one child that does not appear among $u_0, u_1, \dots, u_{2k-1}$, then $Y_{k+1}=Y_k+ k_{\mathcal{T}}(u_{2k+1})$,
\item otherwise $Y_{k+1}=Y_k-1$. 
\end{itemize}
Informally, for $0\leq k\leq N$, $Y_k$ counts the number of white vertices that have been visited before time $2k$
by the contour sequence, or
are children of black vertices visited before time $2k$, and are still ``active'' at time $2k$. Saying that a white vertex is still active means that it may have children that have not yet been visited at time $2k$. It is easy to verify that 
the random variable $Y_k$ (which is only defined on the $\mathcal{F}_k$-measurable set $\{k\leq N+1\}$) is $\mathcal{F}_k$-measurable and $Y_k \geq 1$ if $0 \leq k \leq N$, whereas $Y_{N+1}=0$. 

The white contour function of $\mathcal{T}$ can be expressed in terms of the sequence $(Y_k)_{0 \leq k \leq N+1}$ via the formula: for $0 \leq k \leq N$,
\begin{equation}
\label{lienCY}
C^{\mathcal{T}^0}_{k}= \text{Card} \lbrace j \in \lbrace 0, \dots, k-1 \rbrace : Y_j<\inf \lbrace Y_l: j+1 \leqslant l \leqslant k \rbrace \rbrace.
\end{equation}  
We leave the easy verification of \eqref{lienCY} to the reader.  Note that the sequence $(Y_0,Y_1,\ldots,Y_{N+1})$ is a 
kind of  ``Lukasiewicz path'' for our two-type tree, and that the preceding display is analogous to the 
formula relating the Lukasiewicz path of a (one-type) tree to its height function, see e.g. \cite[Proposition 1.2]{randomtrees}.
We also notice that the indices $j$ counted in $C^{\mathcal{T}^0}_{k}$ correspond to white vertices on the lineage path of $v_k$ in the tree $\mathcal{T}$.

For every $k \geq 0$, we denote the indicator function of the event 
\begin{center}
$\{k \leq N$ and the vertex $u_{2k}$ still has a non visited black child at instant $2k \}$
\end{center} 
by $\eta_k$. 
Then, conditionally on $\mathcal{F}_k$ and on the event $\{k \leq N \}$, $\eta_k$ is distributed as a Bernoulli random variable with parameter $\frac{1}{3}$. Furthermore, conditionally on $\mathcal{F}_k$ and on the event $\{ \eta_k=1 \}$, $k_{\mathcal{T}}(u_{2k+1})=Y_{k+1}-Y_k$ is distributed according to $\mu_1$. On the other hand, if $k\leq N$ and 
$\eta_k=0$, we have $Y_{k+1}=Y_k-1$.

Let $\nu$ be the probability measure on $\{-1, 0, 1, \dots \}$ defined by 
$$\left\lbrace
\begin{aligned}
&\nu(-1)= \frac{2}{3}\\
&\nu(k)= \frac{1}{3}\mu_1(k) \ \text{for} \ k \geq 0 \\
\end{aligned}
\right.$$
and let $(S_k)_{k \geq 0}$ be a random walk with jump distribution $\nu$ starting from $S_0=1$. It follows from the preceding discussion that $(Y_0, Y_1, \dots, Y_{N+1})$ has the same distribution as $(S_0, S_1, \dots, S_\tau)$ where $\tau= \inf \{n \geq 0, S_n=0 \}$. 
The distribution $\nu$ is centered and has a finite variance $\sigma^2=9/2$.

\begin{remark} % A REVOIR 
It follows that $N+1$ (which is the total progeny of the two-type tree $\mathcal{T}$) has the same
distribution as $\tau$, and it is well known that this distribution is the same as the total progeny of 
a (one-type) Galton-Watson tree with offspring distribution $\mu(k)=\nu(k-1)$ for every $k\geq0$. 
A similar fact would hold for any (critical or subcritical) two-type Galton-Watson tree
such that the offspring distribution of white vertices is geometric. This 
was already observed in the recent   article of Janson and Stef\'ansson \cite{J}, with
a different approach involving a bijection between one-type and two-type trees: See \cite[Proposition 3.6]{CK}
for a statement derived from \cite{J}, which corresponds exactly to the previous discussion. 
\end{remark}

In the remaining part of this section, we state a couple of useful facts about the random walk $S$, which are
variants of results than can be found in \cite[Lemmas 1.9 to 1.12]{randomtrees}. For $m \in \Z_+$, we introduce the ``time-reversed'' random walk $ \hat{S}^m$ defined by
$$\hat{S}^m_k=S_m-S_{m-k}+1$$ for $0 \leq k \leq m$. 
The random walk $(\hat{S}^m_k, 0 \leq k \leq m)$ has the same distribution as $(S_k, 0 \leq k \leq m)$.
We set 
$$ M_m= \sup \lbrace S_k, 0 \leq k \leq m \rbrace $$ and $$ I_m= \inf \lbrace S_k, 0 \leq k \leq m \rbrace. $$
For every sequence  $\omega=(\omega(0), \omega(1), \dots)$ of integers of length at least $m$, we set  
$$F_m(\omega)=   \text{Card} \lbrace k \in \lbrace 1, \dots m \rbrace: \omega(k) >\sup \lbrace \omega(j), 0 \leq j \leq k-1 \rbrace \rbrace. $$ 
We then define $(R_{m})_{m \geq 0}$  and $(K_{m})_{m \geq 0}$  by $$R_m= F_m(\hat{S}^m)\ ,\quad K_{m}= F_m(S).$$

Note that we have 
\begin{equation}
\label{lienRS}
R_{m}= \text{Card} \lbrace j \in \lbrace 0, \dots, m-1 \rbrace: \ S_j<\inf \lbrace S_l : j+1 \leq l \leq m \rbrace \rbrace.
\end{equation}
(compare with \eqref{lienCY}). 

\begin{lemma}
\label{1.9randomtrees}
We define by induction $T_0=0$, and for every integer $j\geq 1$,  
$$T_{j+1}= \inf \lbrace k>T_j: S_k>S_{T_j} \rbrace.$$
Then the random variables $(S_{T_j}-S_{T_{j-1}})_{j \geq 1}$ are independent and identically distributed, and the distribution of $S_{T_1}-S_{T_0}=S_{T_1}-1$ is given by $$P(S_{T_1}-1=k)=\frac{3}{2} \nu([k, \infty))$$ for $k \geq 1 $. 
\end{lemma}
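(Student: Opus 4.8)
The plan is to treat the two assertions separately: the independence and identical distribution of the ladder increments will follow from the strong Markov property, while the explicit formula for the ladder-height law will come from a time-reversal (duality) computation that crucially exploits the fact that the jumps of $S$ are bounded below by $-1$.

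First I would reduce to a walk started at $0$. By spatial homogeneity, $H:=S_{T_1}-1$ has the law of the first strict ascending ladder height of a random walk with jump law $\nu$ started from $0$, that is, of $S_\sigma$ where $\sigma=\inf\{n\ge 1:S_n\ge 1\}$ (the jumps are integer valued and $\ge -1$, so $S_n>0$ means $S_n\ge 1$). Since $\nu$ is centered with finite variance, the walk oscillates, so $\limsup_n S_n=+\infty$ almost surely and every $T_j$ is almost surely finite. Applying the strong Markov property at $T_j$, the shifted walk $(S_{T_j+k}-S_{T_j})_{k\ge 0}$ is an independent copy of $S-S_0$, and $S_{T_{j+1}}-S_{T_j}$ is the first strict ascending ladder height of this copy; hence the increments $(S_{T_j}-S_{T_{j-1}})_{j\ge 1}$ are i.i.d.\ with the common law of $H$. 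This settles the first assertion.

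For the law of $H$, I would condition on the jump that realizes the ascending ladder. On $\{\sigma=n\}$ one has $S_{n-1}=-a$ for some $a\ge 0$ and $S_n=S_{n-1}+X_n$, where $X_n$ is the $n$-th jump; then $H=k$ forces $X_n=a+k$, and $X_n$ is independent of $\mathcal{F}_{n-1}$. Summing over $n$ and $a$ gives
\begin{equation*}
P(H=k)=\sum_{a\ge 0} g(a)\,\nu(a+k),\qquad k\ge 1,
\end{equation*}
where $g(a)=\sum_{m\ge 0}P\big(S_m=-a,\ S_1\le 0,\dots,S_m\le 0\big)$ is the expected number of visits to $-a$ strictly before time $\sigma$ (equivalently, the Green function of the walk killed upon leaving $(-\infty,0]$).

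The crux, which I expect to be the main obstacle, is to show that $g(a)$ does not depend on $a$ and equals $3/2$. Here I would use time reversal, exactly as encoded by the walks $\hat{S}^m$ introduced above: reversing a path of length $m$ shows that $P(S_m=-a,\ S_1\le 0,\dots,S_m\le 0)$ equals the probability that the equidistributed reversed walk first hits level $-a$ at time $m$ while staying $\ge -a$ beforehand. Summing over $m$, $g(a)$ is therefore the expected number of visits to $-a$ strictly before the first passage to $-a-1$; since the jumps are $\ge -1$ the walk must visit $-a$ before reaching $-a-1$, so by spatial homogeneity and the strong Markov property this expectation equals the expected number of visits to $0$ before the first passage to $-1$, independently of $a$. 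Finally, using again that $S$ is skip-free downward, from level $0$ the walk reaches $-1$ before returning to $0$ if and only if its next step is $-1$, an event of probability $\nu(-1)=2/3$; hence the number of visits to $0$ before hitting $-1$ is geometric and $g(a)=1/\nu(-1)=3/2$. Substituting into the displayed identity yields $P(H=k)=\tfrac32\sum_{a\ge 0}\nu(a+k)=\tfrac32\,\nu([k,\infty))$, and a consistency check confirms $\sum_{k\ge 1}P(H=k)=\tfrac32\,E[X^+]=1$, as it must be.
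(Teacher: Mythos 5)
Your proof is correct, but it takes a genuinely different route from the paper's. The paper disposes of the ladder-height law in three lines: it introduces the walk $S'$ started from $0$ with the \emph{weak} ascending ladder epoch $T'_1=\inf\{k>0: S'_k\geq 0\}$, quotes Lemma 1.9 of \cite{randomtrees} for the identity $P(S'_{T'_1}=k)=\nu([k,\infty))$, $k\geq 0$, and then observes that the strict ladder height $S_{T_1}-1$ is distributed as $S'_{T'_1}$ conditioned on $\{S'_{T'_1}>0\}$; since $P(S'_{T'_1}>0)=1-\nu([0,\infty))=\nu(-1)=2/3$, the factor $3/2$ appears as a normalization. You instead give a self-contained derivation: the decomposition $P(H=k)=\sum_{a\geq 0}g(a)\,\nu(a+k)$ over the position $-a$ just before the ladder epoch, the duality identification of the Green function $g(a)$ as an expected number of visits to $-a$ before the first passage to $-a-1$, and the skip-free-downward argument showing $g(a)=1/\nu(-1)=3/2$ for every $a$. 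In effect you reprove the cited lemma (whose proof in \cite{randomtrees} is also a duality argument) merged with the conditioning step, which is longer but has the merit of explaining structurally where the constant $3/2$ comes from, namely as the reciprocal of $\nu(-1)$. One cosmetic point: when you reverse the path you describe the reversed event as the walk ``first hitting'' $-a$ at time $m$; the correct description is that it \emph{visits} $-a$ at time $m$ while staying $\geq -a$ beforehand (earlier visits to $-a$ are allowed). Your very next sentence — summing over $m$ to get the expected number of visits to $-a$ strictly before the first passage to $-a-1$ — uses the correct interpretation, so nothing breaks, but the wording should be fixed.
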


\begin{proof}
The fact that the random variables $(S_{T_j}-S_{T_{j-1}})_{j \geq 1}$ are i.i.d. is immediate from the strong Markov property. 
Let $S'$ be a random walk with jump distribution $\nu$, starting from $S'_0=0$, and $T'_1=\inf \{k >0, S'_k \geq 0 \}$.
By \cite[Lemma 1.9]{randomtrees}, we have $P(S'_{T'_1}=k)= \nu([k, \infty))$ for every $k \geq 0$. Next it is clear that the law of $S_{T_1}-1$ coincides with the conditional law of $S'_{T'_1}$ knowing that $\{S'_{T'_1} >0 \}$. The desired result easily follows. 
\end{proof}

It follows that the distribution of $S_{T_1}-1$ has a finite first moment, given by $E(S_{T_1}-1)=3\sigma^2/4$. A simple argument using  the law of large numbers then shows that 
$$\frac{M_m}{K_{m}} \underset{m \to \infty} {\longrightarrow} \frac{3 \sigma^2}{4}$$
almost surely. The next lemma provides estimates for ``moderate deviations'' in this convergence.

\begin{lemma}
\label{1.11randomtrees}
Let $\epsilon \in \left(0, \frac{1}{4} \right)$. We can find  $\epsilon'>0$ and  an integer $n_0 \geq 1$ such that for $m \geq n_0$ et $ l \in \lbrace 0, \dots, m \rbrace$, we have the bound $$P\left( \left|M_l- \frac{3 \sigma^2}{4} K_{l} \right|> m^{1/4+\epsilon} \right) < \exp\left( -m^{\epsilon'} \right).$$
\end{lemma}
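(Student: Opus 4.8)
The plan is to exploit the renewal structure of the strict ascending ladder epochs $(T_j)$ from Lemma \ref{1.9randomtrees}. Write $X_j = S_{T_j} - S_{T_{j-1}}$ for the $j$-th ladder height, so that by Lemma \ref{1.9randomtrees} the $X_j$ are i.i.d.\ with mean $\mu := 3\sigma^2/4$ and with $X_j \geq 1$ (the walk being integer-valued and the records strict). For any $l$ the running maximum $M_l$ is attained at the last ladder epoch $T_{K_l}\leq l$, whence the exact identity $M_l = S_{T_{K_l}} = 1 + \sum_{j=1}^{K_l} X_j$, and therefore
$$M_l - \frac{3\sigma^2}{4}K_l = 1 + \sum_{j=1}^{K_l}(X_j - \mu).$$
It thus suffices to control the centred random walk $V_n := \sum_{j=1}^n (X_j - \mu)$ evaluated at the random index $K_l$. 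The crucial structural fact is that $\nu$, and hence each $X_j$, has geometric tails: from the explicit form of $\mu_1$ one has $\mu_1(k) \sim c\,(3/4)^k$, so $E[e^{\theta X_1}] < \infty$ and $\sum_k e^{\theta k}\nu(k) < \infty$ for all small enough $\theta > 0$.

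The argument then splits into two steps, both uniform in $l \in \{0,\dots,m\}$. First I would bound the number of records. Since $X_j \geq 1$, the identity above gives the deterministic domination $K_l \leq M_l - 1 \leq M_m$. Because $\nu$ is centred with exponential moments, a Chernoff bound combined with Doob's submartingale inequality applied to $\exp(\theta S_n)$ yields, for any fixed $\gamma \in (0,1/2)$,
$$P\bigl(M_m > m^{1/2+\gamma}\bigr) \leq \exp\bigl(-c\,m^{2\gamma}\bigr)$$
for $m$ large, where the optimal $\theta \sim m^{-1/2+\gamma} \to 0$ keeps us in the Gaussian regime $\log\sum_k e^{\theta k}\nu(k) \sim \tfrac12\sigma^2\theta^2$. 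Consequently, with $N := \lceil m^{1/2+\gamma}\rceil$, the event $\{K_m \leq N\}$ has probability at least $1 - \exp(-c\,m^{2\gamma})$.

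Second, on the event $\{K_m \leq N\}$ we have $K_l \leq N$ for every $l \leq m$, so
$$\Bigl|M_l - \tfrac{3\sigma^2}{4}K_l\Bigr| = \bigl|1 + V_{K_l}\bigr| \leq 1 + \max_{0 \leq n \leq N}|V_n|,$$
uniformly in $l$; the random index has been replaced by a deterministic maximum. Applying Doob's inequality and a Chernoff bound to $\exp(\pm\theta V_n)$ (legitimate by the exponential moments of $X_1$), and using that $V$ is centred with per-step variance $\sigma_X^2 = \mathrm{Var}(X_1) < \infty$, I obtain
$$P\Bigl(\max_{0 \leq n \leq N}|V_n| > \tfrac12\,m^{1/4+\epsilon}\Bigr) \leq 2\exp\bigl(-c'\,m^{2\epsilon-\gamma}\bigr)$$
for $m$ large (here the relevant $\theta \sim m^{-1/4+\epsilon-\gamma} \to 0$ since $\epsilon < 1/4$). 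Combining the two steps via
$$P\Bigl(\bigl|M_l - \tfrac{3\sigma^2}{4}K_l\bigr| > m^{1/4+\epsilon}\Bigr) \leq P(K_m > N) + P\Bigl(\max_{0\le n\le N}|V_n| > \tfrac12 m^{1/4+\epsilon}\Bigr)$$
(valid once $m$ is large enough that $m^{1/4+\epsilon}-1 > \tfrac12 m^{1/4+\epsilon}$) and choosing $\gamma = 2\epsilon/3$ to balance the two exponents at $4\epsilon/3$, the right-hand side is at most $\exp(-m^{\epsilon'})$ for any fixed $\epsilon' < 4\epsilon/3$ and all large $m$, which is the claim.

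I expect the main obstacle to be the correct treatment of the random number of summands $K_l$: the trivial bound $K_l \leq l \leq m$ would produce fluctuations of order $\sqrt m$, far larger than the target $m^{1/4+\epsilon}$. The resolution is the observation that records occur only $O(\sqrt m)$ times, made quantitative above through $K_l \leq M_m$ and the exponential concentration of the maximum; this reduces the effective number of summands from $m$ to $m^{1/2+\gamma}$ and renders the moderate-deviation scale $m^{1/4+\epsilon}$ accessible. A secondary point needing care is the uniformity in $l$, which is obtained by dominating both $K_l$ and the stopped sum $V_{K_l}$ by quantities ($M_m$ and $\max_{n\le N}|V_n|$) that no longer depend on $l$.
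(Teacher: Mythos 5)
Your proof is correct, and it is essentially the approach the paper intends: the paper's proof of Lemma \ref{1.11randomtrees} consists of the single remark that the arguments are easily adapted from the proof of Lemma 1.11 in \cite{randomtrees}, and that proof runs exactly as yours does --- decompose $M_l$ into the i.i.d.\ ladder heights of Lemma \ref{1.9randomtrees}, bound the record count $K_l$ by the running maximum (which concentrates at scale $\sqrt{m}$ thanks to the exponential moments of $\nu$), and apply Chernoff--Doob bounds to the centred ladder-height sums, uniformly in $l$. In effect you have written out in full the adaptation that the paper leaves to the reader.
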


\begin{proof}
The arguments are easily adapted from the proof of Lemma 1.11 in \cite{randomtrees}.
\end{proof}
 
\section{Convergence of the contour and the label functions}
\label{cvgarbre}
We keep the notation
$(\mathcal{T}, (\ell(u))_{u\in \mathcal{T}})$ for a random labeled tree  such that 
$\mathcal{T}$ is a two-type Galton-Watson tree
 with offspring distributions $(\mu_0,\mu_1)$ given by Proposition \ref{loiarbre}, and 
 conditionally on $\mathcal{T}$ the labels $(\ell(u))_{u\in \mathcal{T}}$
 are uniformly distributed among admissible labelings. As previously, $N=|\mathcal{T}|$. 
In this section, we discuss the convergence as $n\to\infty$ of the conditional distribution of the pair  $(n^{-1/2}C^{\mathcal{T}^0}_{nt}, n^{-1/4} L^{\mathcal{T}^0}_{nt})_{0 \leq t \leq 1}$ knowing that $N=n$ (recall the notation $C^{\mathcal{T}^0}$
and $L^{\mathcal{T}^0}$ for the contour function and the label function of $\mathcal{T}^0$, see the
end of subsection \ref{sec:trees}). The whole section is devoted to the proof of the next theorem. 

\begin{theorem}
\label{cvgcontourlabel}
The conditional distribution of $$\left( \frac{1}{\sqrt{n}} C^{\mathcal{T}^0}_{nt}, \frac{1}{n^{1/4}} L^{\mathcal{T}^0}_{nt} \right)_{0 \leq t \leq 1}$$ knowing that $N=n$ converges as $n \to \infty$ to the law of 
$$ \left( \frac{4 \sqrt{2}}{9}\, \mathbf{e}_t, 2^{1/4} \,Z_t  \right)_{0 \leq t \leq 1}$$
where $\mathbf{e}$ is a normalized Brownian excursion and $Z$ is the Brownian snake driven by this excursion. 
\end{theorem}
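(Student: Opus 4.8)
The plan is to reduce the statement to properties of the one-type random walk $S$ defined before Lemma~\ref{1.9randomtrees}, treating the white contour function and the label function separately and then combining them into the joint convergence.

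\emph{Step 1: the contour function.} I would first show that, conditionally on $N=n$, the process $(n^{-1/2}C^{\mathcal{T}^0}_{nt})_{0\le t\le1}$ converges to $(\frac{4\sqrt2}{9}\mathbf{e}_t)_{0\le t\le1}$. Since $(Y_0,\dots,Y_{N+1})$ has the same law as $(S_0,\dots,S_\tau)$ and $\{N=n\}$ corresponds to $\{\tau=n+1\}$, comparing \eqref{lienCY} with \eqref{lienRS} shows that, under this conditioning, $(C^{\mathcal{T}^0}_k)_{0\le k\le n}$ has the same law as $(R_k)_{0\le k\le n}$ built from $S$ conditioned on $\{\tau=n+1\}$. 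Now $R_k=F_k(\hat S^k)$ and $\sup_{0\le j\le k}\hat S^k_j=S_k-I_k+1$, and because $\hat S^k$ has the same law as $S$ on $[0,k]$, Lemma~\ref{1.11randomtrees} applied with $m=n$ together with a union bound over $k\in\{0,\dots,n\}$ give that the event
\[
\Big\{\ \sup_{0\le k\le n}\big|\tfrac{3\sigma^2}{4}\,R_k-(S_k-I_k+1)\big|\le n^{1/4+\epsilon}\ \Big\}
\]
has unconditioned probability at least $1-(n+1)\exp(-n^{\epsilon'})$. Since this event depends only on $(S_0,\dots,S_n)$ and $P(\tau=n+1)\sim c\,n^{-3/2}$, the super-polynomial decay lets us transfer it to the conditioned law. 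On $\{\tau=n+1\}$ one has $S_k\ge1$ for $k\le n$, hence $I_k=1$, so $R_k=\frac{4}{3\sigma^2}S_k+o(\sqrt n)$ uniformly. Finally $S$ is the Lukasiewicz path of a critical one-type Galton--Watson tree with offspring variance $\sigma^2=9/2$, so the invariance principle for conditioned walks (see \cite{randomtrees}) yields $n^{-1/2}S_{nt}\to\sigma\,\mathbf{e}_t$ under $\{\tau=n+1\}$, and therefore $n^{-1/2}C^{\mathcal{T}^0}_{nt}\to\frac{4}{3\sigma^2}\,\sigma\,\mathbf{e}_t=\frac{4\sqrt2}{9}\mathbf{e}_t$.

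\emph{Step 2: the label function.} Conditionally on $\mathcal{T}$ the labels are uniform over admissible labelings, and around a black vertex with $k$ children the increments of the labels of the surrounding white corners form, after adding $1$, a uniform composition of $k+1$ into $k+1$ nonnegative parts; these are independent across distinct black vertices. Hence, given $\mathcal{T}$, the quantity $\ell(v_k)$ is a centered sum of independent contributions, one per black ancestor of $v_k$, and for $i<j$ the displacements of $\ell$ along the two branches issued from the most recent common ancestor $v_i\wedge v_j$ are independent. The number of black ancestors of $v_k$ is exactly $C^{\mathcal{T}^0}_k$; computing the variance of one such contribution (a partial sum of a uniform bridge with $k+1$ steps stopped at a uniformly chosen child) gives $\frac{k+1}{3}$ on average over the exit child, and averaging $\frac{k+1}{3}$ against the size-biased law of $\mu_1$ along the spine gives the per-ancestor variance $\frac13\big(\frac{\sum_k k^2\mu_1(k)}{m_1}+1\big)=\frac13\cdot\frac{27}{4}=\frac94$. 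Thus $\mathrm{Var}(\ell(v_{nt})\mid\mathcal{T})\approx\frac94\,C^{\mathcal{T}^0}_{nt}\approx\frac94\cdot\frac{4\sqrt2}{9}\sqrt n\,\mathbf{e}_t=\sqrt2\,\sqrt n\,\mathbf{e}_t$, which matches $\mathrm{Var}(2^{1/4}Z_t\mid\mathbf{e})=\sqrt2\,\mathbf{e}_t$ after the scaling by $n^{-1/4}$. A conditional central limit theorem for these sums of $O(1)$ independent terms, of number of order $\sqrt n$, then yields the finite-dimensional convergence of $(n^{-1/2}C^{\mathcal{T}^0}_{nt},n^{-1/4}L^{\mathcal{T}^0}_{nt})$ to $(\frac{4\sqrt2}{9}\mathbf{e}_t,2^{1/4}Z_t)$, jointly, where $Z$ is the Brownian snake driven by $\mathbf{e}$.

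\emph{Step 3: tightness, and the main difficulty.} To promote finite-dimensional convergence to convergence in $C([0,1],\R^2)$ I would prove conditional moment bounds of Kolmogorov type for the label increments, giving tightness of the label function, tightness of the contour being immediate from Step~1. The main obstacle is precisely that we condition on the total number of edges $N=n$, not on the number of vertices of a single type: this is what forces the introduction of the modified Lukasiewicz path $(Y_k)$ and its identification with the one-type walk $S$ killed at $\tau$, and it makes the delicate point the uniform moderate-deviation control of Lemma~\ref{1.11randomtrees} transferred to the conditioned walk at the polynomial cost $n^{3/2}$. On the label side, the genuinely technical steps are carrying the exact constant $9/4$ through the size-biased spine and controlling the conditional fourth moments uniformly in $n$.
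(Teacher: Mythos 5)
Your Step 1 is sound: combining the identity \eqref{lienCY}--\eqref{lienRS} with a union bound over Lemma \ref{1.11randomtrees}, and transferring the resulting superpolynomially rare bad event through the polynomial conditioning probability $P(\tau=n+1)\sim cn^{-3/2}$ from \eqref{Kemperman} and \eqref{thlimlocfeller}, is a valid way to get the contour limit; indeed this is essentially the route the paper's closing Remark says would suffice \emph{for the contour alone}, following \cite{MaMo} and \cite{randomtrees}, and your constant $\frac{4}{3\sigma}=\frac{4\sqrt2}{9}$ is right. The genuine gap is in Step 2, and it is exactly where the paper locates the difficulty. Conditionally on the tree, $\ell(v_k)$ is indeed a sum of independent displacements over black ancestors, with variance $2i(p-i+1)/(p+2)$ per ancestor, and your averaged value $9/4$ is correct. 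But to run a conditional CLT you need a law of large numbers for the sum of these variances along ancestral lines of the tree \emph{conditioned on} $\{N=n\}$: that is, you need the black ancestors to carry the size-biased offspring law $k\mu_1(k)/m_1$ with enough concentration, jointly at several times. You assert this (``averaging against the size-biased law along the spine'') but never prove it, and your Step-1 transfer trick does not apply as written: it would require a moderate-deviation estimate with superpolynomially small failure probability for the \emph{weighted} ladder-point sum $\sum_u \mathrm{Var}_u$ against $\frac{9}{4}C^{\mathcal{T}^0}_k$, i.e.\ a weighted analogue of Lemma \ref{1.11randomtrees}, which you do not provide (and which is not in the paper either). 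The paper deliberately avoids proving any label CLT under $\{N=n\}$: it uses Kemperman's formula to write the $\{N=n\}$ law on $[0,1-\delta]$ as an absolutely continuous perturbation, with density $\Phi_n(Y_{\lceil(1-\delta)n\rceil})$ subsequently replaced by $f_\delta\bigl(\frac{3\sigma^2}{4}C^{\mathcal{T}^0}_{\lceil(1-\delta)n\rceil}/\sqrt{m_n}\bigr)$, of the $\{N\geq(1-\delta)n\}$ law, and then invokes Miermont's theorems \cite{Mtrees} (where the size-biased spine analysis for labels is already done for unconditioned forests) to treat the latter conditioning in Lemma \ref{cvggregory}, concluding with the excursion density $g_\delta$ and a time-reversal argument for tightness near $t=1$. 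So the hard analytic content of your Step 2 is precisely what is missing.

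Two further points would need repair even if you granted the spine statistics. First, the displacements along the two branches issued from $v_i\wedge v_j$ are \emph{not} independent when the branches split at a black vertex: the label increments around a single black vertex are jointly (not independently) uniform over the admissible set, so obtaining the limiting covariance $\min_{[s,t]}\mathbf{e}$ requires an extra argument showing this shared-vertex contribution is negligible. Second, your Step 3 tightness via conditional Kolmogorov bounds must be established under the conditioned law uniformly up to $t=1$, where the conditioning bites hardest; the paper needs a separate time-reversal argument exactly there, and your sketch does not address it.
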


\begin{remark}
\label{rqCt}
We note that, for every $i \in \lbrace 0, \dots N+1 \rbrace$, $C^{\mathcal{T}}_{2i}= 2 C^{\mathcal{T}^0}_i$ and $|C^{\mathcal{T}}_{2i+1}-C^{\mathcal{T}}_{2i}|=1$. From this trivial observation, the convergence in distribution of  Theorem \ref{cvgcontourlabel} also implies that $ \left( \frac{1}{2\sqrt{n}} C^{\mathcal{T}}_{2nt} \right)_{0 \leq t \leq 1} $ converges to $\left(\frac{4\sqrt{2}}{9} \mathbf{e}_t \right)_{0 \leq t \leq 1}$, and the latter convergence holds jointly with that of Theorem \ref{cvgcontourlabel}. This simple remark will be useful later. 
\end{remark}
We recall that a normalized Brownian excursion $\mathbf{e}$ is just a Brownian excursion conditioned to have 
duration $1$, and that the distribution of $Z$ can be described by saying that, conditionally on $\mathbf{e}$,
$(Z_t)_{0\leq t\leq 1}$ is a centered Gaussian process with continuous sample paths, with covariance
$$E[Z_sZ_t\mid \mathbf{e}]= \min_{s\wedge t\leq r\leq s\vee t} \mathbf{e}_r.$$
It will sometimes be convenient to make the convention that $\mathbf{e}_t=Z_t=0$ for $t>1$.
Later we will consider the Brownian snake driven by other types of Brownian excursion, or by reflected 
linear Brownian motion. Obviously this is defined by the same conditional distribution as above. 

As we already mentioned in the introduction, Theorem \ref{cvgcontourlabel} is closely related to
analogous statements proved in \cite{MM,Mtrees} for multitype Galton-Watson trees. A major difference
however is the fact that \cite{MM,Mtrees} condition on the number of vertices of one particular type,
and not on the total number of vertices in the tree. Apparently the latter conditioning (on the total size of the
tree) cannot be handled easily by the methods of \cite{MM,Mtrees}. See in particular the remarks
in \cite[p.1682]{MM}.

Let us turn to the proof. We will rely on formula \eqref{lienCY} for $C^{\mathcal{T}^0}$. In connection with this formula, we recall
that  
$(Y_0, Y_1, \dots, Y_{N+1})$ has the same distribution as $(S_0, S_1, \dots, S_\tau)$, where $(S_k)_{k \geq 0}$ is a random walk with jump distribution $\nu$ starting from $1$, and $\tau= \inf \{n \geq 0: S_n=0 \}$. It will be convenient to use the notation $P_j$ for a probability measure under which the random walk $S$ starts from $j$.
By standard local limit theorems (see e.g. Theorems 2.3.9 and 2.3.10
in \cite{LL}), we have 
\begin{equation}
\label{thlimlocfeller}
\lim_{m \to \infty} \sup_{j \in \Z}  \;\left(1\vee \frac{|j|^2}{m}\right)\,\left|\sqrt{m} P_j(S_m=0)- \frac{1}{\sigma \sqrt{2\pi}} \exp \left(-\frac{j^2}{2 \sigma^2 m} \right) \right| =0.
\end{equation}
Here $\sigma^2=9/2$ is the variance of the distribution $\nu$. 
We also recall Kemperman's formula (see e.g. \cite[p.122]{pitman}). Let $m\geq j\geq 1$ be two integers. Then,
\begin{equation}
\label{Kemperman}
P_j(\tau=m)= \frac{j}{m} P_j(S_m=0).
\end{equation}
Since $N+1$ has the same distribution as $\tau$ under $P_1$, by combining Kemperman's formula with
\eqref{thlimlocfeller}, we immediately get
\begin{equation}
\label{estimationpopu}
n^{3/2}P(N=n) \underset {n \to \infty} {\longrightarrow} \frac{1}{\sigma\sqrt{2\pi}}
 \ \text{ and } \ 
 n^{1/2} P(N \geq n) \underset {n \to \infty} {\longrightarrow} \frac{2}{\sigma \sqrt{2 \pi}}
\end{equation}

\subsection*{First step}
Let $\delta \in (0,1)$ and let $\Psi$ be a bounded continuous function on the space $\mathcal{C}([0,1],\R^2)$
of all continuous functions from $[0,1]$ into $\R^2$. Recall the definition of the 
$\sigma$-fields $\mathcal{F}_k$. We have
\begin{equation}
\label{expressiondebut}
\begin{aligned}
&E\left[\Psi \left( \left( \frac{1}{\sqrt{n}} C^{\mathcal{T}^0}_{nt}, \frac{1}{n^{1/4}} L^{\mathcal{T}^0}_{nt} \right), 0 \leqslant t \leqslant 1-\delta \right) \mathbf{1}_{\lbrace N=n \rbrace} \right]\\
&=E\left[\Psi \left( \left( \frac{1}{\sqrt{n}} C^{\mathcal{T}^0}_{nt}, \frac{1}{n^{1/4}} L^{\mathcal{T}^0}_{nt} \right), 0 \leqslant t \leqslant 1-\delta \right) \mathbf{1}_{\lbrace N  \geq \lceil (1-\delta)n \rceil \rbrace)} P(N=n \,| \,\mathcal{F}_{\lceil (1-\delta)n \rceil}) \right]. \\
\end{aligned} 
\end{equation}
We then need to study the term $P(N=n\, |\, \mathcal{F}_{\lceil (1-\delta)n \rceil})$.

We notice that, conditionally on $\{ N \geq \lceil (1-\delta) n \rceil\}$ and on the $\sigma$-field $\mathcal{F}_{\lceil (1-\delta)n \rceil}$, the sequence $(Y_{\lceil (1-\delta)n \rceil}, Y_{\lceil (1-\delta)n \rceil+1}, \dots, Y_{N+1})$ has the same distribution as a random walk with jump distribution $\nu$ starting from $Y_{\lceil (1-\delta)n \rceil}$ and stopped when it hits $0$. Thus,  we apply Kemperman's formula \eqref{Kemperman},  and we obtain, still on the event $\{ N \geq \lceil (1-\delta) n \rceil\}$,
\begin{equation}
\label{phi}
P(N=n\,| \,\mathcal{F}_{\lceil (1-\delta)n \rceil})= P_{Y_{\lceil (1-\delta)n \rceil}}(\tau=n+1-\lceil (1-\delta) n \rceil)=\Phi_n(Y_{\lceil (1-\delta)n \rceil})
\end{equation} 
 where $\Phi_n(j)=\frac{j}{m_n} P_j(S_{m_n}=0)$, for $0 \leq j \leq m_n$, and $m_n=n+1-\lceil (1-\delta) n \rceil=\lfloor \delta n \rfloor +1$. 
\begin{lemma}
\label{cvgphiY}
 We have $$\lim_{n \to \infty} \sqrt{n}\, E\! \left[ \mathbf{1}_{\{N \geq \lceil (1-\delta)n \rceil\}} \left|   n \Phi_n(Y_{\lceil (1-\delta)n \rceil})-f_\delta\!\left(\frac{Y_{\lceil (1-\delta)n \rceil}}{\sqrt{m_n}} \right) \right|  \right]=0,$$
where for every $x \geq 0$, 
$$f_{\delta}(x)= \frac{x}{\delta \sigma \sqrt{2 \pi}}\exp \left(-\frac{x^2}{2 \sigma^2} \right).$$
\end{lemma}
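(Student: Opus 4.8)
The plan is to transfer everything to the random walk $S$ and then to exploit a martingale identity that exactly controls the first moment of $Y_{\lceil(1-\delta)n\rceil}$. Write $k_n=\lceil(1-\delta)n\rceil$ throughout. Since $N+1$ has the law of $\tau$ and $(Y_0,\dots,Y_{N+1})$ has the law of $(S_0,\dots,S_\tau)$ under $P_1$, the event $\{N\geq k_n\}$ corresponds to $\{\tau>k_n\}$, on which $Y_{k_n}=S_{k_n}\geq1$. Setting $g_n(j)=|n\Phi_n(j)-f_\delta(j/\sqrt{m_n})|$, the quantity to estimate becomes
$$\sqrt{n}\,E_1\!\left[\mathbf{1}_{\{\tau>k_n\}}\,g_n(S_{k_n})\right].$$
The first thing I would do is record the pointwise behaviour of $g_n$.

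Introduce $p_j=\sqrt{m_n}\,P_j(S_{m_n}=0)$ and $q_j=\frac{1}{\sigma\sqrt{2\pi}}\exp(-j^2/(2\sigma^2 m_n))$. Since $\Phi_n(j)=\frac{j}{m_n}P_j(S_{m_n}=0)=\frac{j}{m_n^{3/2}}p_j$ and $f_\delta(j/\sqrt{m_n})=\frac{j}{\delta\sqrt{m_n}}q_j$, a direct computation gives the exact algebraic identity
$$n\Phi_n(j)-f_\delta\!\left(\frac{j}{\sqrt{m_n}}\right)=\frac{j}{\sqrt{m_n}}\left[\frac{n}{m_n}(p_j-q_j)+\left(\frac{n}{m_n}-\frac{1}{\delta}\right)q_j\right],$$
valid for every integer $j\geq0$. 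The local limit theorem \eqref{thlimlocfeller} yields $\sup_j|p_j-q_j|\leq\varepsilon_n$ with $\varepsilon_n\to0$, while $m_n=\lfloor\delta n\rfloor+1$ gives $|\tfrac{n}{m_n}-\tfrac1\delta|=O(1/n)$ and $q_j\leq\frac{1}{\sigma\sqrt{2\pi}}$. Hence, for some constant $C$ and all $j\geq1$,
$$g_n(j)\leq\frac{j}{\sqrt{m_n}}\left[\frac{n}{m_n}\,\varepsilon_n+\frac{C}{n}\right].$$

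The crucial step is then to control $E_1[\mathbf{1}_{\{\tau>k_n\}}S_{k_n}]$, and here I would use that $S$ is a centered walk, so $(S_{k\wedge\tau})_{k\geq0}$ is a martingale. Applying optional stopping at the bounded time $\tau\wedge k_n$ and noting that $S_{\tau\wedge k_n}=S_{k_n}\mathbf{1}_{\{\tau>k_n\}}$ (because $S_\tau=0$), one gets the exact identity $E_1[\mathbf{1}_{\{\tau>k_n\}}S_{k_n}]=E_1[S_0]=1$. Combining this with the bound on $g_n$ and pulling out the $j$-independent factor,
$$\sqrt{n}\,E_1\!\left[\mathbf{1}_{\{\tau>k_n\}}g_n(S_{k_n})\right]\leq\frac{\sqrt{n}}{\sqrt{m_n}}\left[\frac{n}{m_n}\varepsilon_n+\frac{C}{n}\right]E_1\!\left[\mathbf{1}_{\{\tau>k_n\}}S_{k_n}\right]=\sqrt{\frac{n}{m_n}}\left[\frac{n}{m_n}\varepsilon_n+\frac{C}{n}\right].$$
Since $n/m_n\to1/\delta$, the right-hand side tends to $0$, which is the claim.

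The only genuinely delicate ingredient is the martingale identity $E_1[\mathbf{1}_{\{\tau>k_n\}}S_{k_n}]=1$: it is precisely what keeps the factor $\sqrt{n}$ from blowing up the estimate, and it lets me avoid describing the full (Rayleigh-type) limiting law of $Y_{k_n}/\sqrt{n}$. Everything else is the uniform local limit theorem \eqref{thlimlocfeller} together with elementary asymptotics of $m_n$. In particular the weight $(1\vee j^2/m)$ appearing in \eqref{thlimlocfeller} is not needed here: the crude uniform bound $|p_j-q_j|\leq\varepsilon_n$ suffices once the linear moment of $S_{k_n}$ on $\{\tau>k_n\}$ is under control.
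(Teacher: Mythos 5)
Your proof is correct, and it handles the crucial point by a genuinely different mechanism than the paper. Both arguments start from the same algebra ($n\Phi_n(j)$ versus $f_\delta(j/\sqrt{m_n})$) and the local limit theorem, but they diverge in how the linear factor $j$ (which is typically of order $\sqrt{n}$ at time $\lceil(1-\delta)n\rceil$) is neutralized. The paper proves a bound that is \emph{uniform} in $j$: it uses the weight $\bigl(1\vee j^2/m\bigr)$ in \eqref{thlimlocfeller} to absorb the factor $j/\sqrt{m_n}$ (since $x/(1\vee x^2)\leq 1$), obtaining $\sup_{0\leq j\leq m_n}\bigl|n\Phi_n(j)-f_\delta(j/\sqrt{m_n})\bigr|\to 0$, and then pays the factor $\sqrt{n}$ with the tail estimate $\sqrt{n}\,P(N\geq\lceil(1-\delta)n\rceil)=O(1)$ from \eqref{estimationpopu}. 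You instead keep a bound that grows linearly in $j$ (so the unweighted form of the local limit theorem suffices, as you correctly note) and pay the factor $\sqrt{n}$ with the exact identity $E_1\bigl[\mathbf{1}_{\{\tau>k_n\}}S_{k_n}\bigr]=1$, obtained by optional stopping of the centered walk at the bounded time $\tau\wedge k_n$, using $S_\tau=0$. This martingale step is valid, and it is a clean substitute: heuristically $P(\tau>k_n)\asymp n^{-1/2}$ while $S_{k_n}$ is of order $n^{1/2}$ on that event, and the stopped-martingale identity captures this cancellation exactly rather than estimating the two factors separately. Your route is arguably more economical (neither the weight in \eqref{thlimlocfeller} nor \eqref{estimationpopu} is needed), whereas the paper's uniform estimate \eqref{cvgphi} is a stronger intermediate statement that it reuses implicitly in the structure of the first step; both are complete proofs of the lemma as stated.
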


\begin{proof}
We use the  local limit theorem
\eqref{thlimlocfeller} to evaluate $ n \Phi_n(j)$. Remark that $$n \Phi_n(j)= \frac{n}{m_n} j P_j(S_{m_n}=0)$$ 
and  $ n/m_n \longrightarrow 1/\delta$ as $n\to\infty$. 
It
easily follows from \eqref{thlimlocfeller} that
$$ \lim_{n \to \infty} \sup_{0\leq j\leq m_n} \left| j P_j(S_{m_n}=0)- \frac{1}{\sigma \sqrt{2 \pi}} \frac{j}{\sqrt{m_n}} \exp \left(-\frac{j^2}{2\sigma^2 m_n} \right) \right|=0.$$
Thus we have 
$$\lim_{n \to \infty} \sup_{0\leq j\leq m_n} \left|n \Phi_n(j)-\frac{1}{\sigma \delta \sqrt{2 \pi}} \frac{j}{\sqrt{m_n}} \exp \left(-\frac{j^2}{2\sigma^2 m_n} \right) \right|=0. $$
Recalling the definition of $f_{\delta}$, we have thus obtained
\begin{equation}
\label{cvgphi}
\lim_{n \to \infty} \sup_{0 \leq j \leq m_n} \left| n \Phi_n(j)- f_\delta\!\left(\frac{j}{\sqrt{m_n}} \right) \right| =0,
\end{equation} 
and the result of the lemma follows using also \eqref{estimationpopu}. 
\end{proof}

The next step is given by the following lemma. 
\begin{lemma}
\label{cvgfCY}
We have 
$$\sqrt{n} \,E\!\left[\mathbf{1}_{\{N \geq \lceil (1-\delta)n \rceil\}} \left|f_\delta\!\left(\frac{Y_{\lceil (1-\delta)n \rceil}}{\sqrt{m_n}} \right)-f_\delta\!\left( \frac{3 \sigma^2}{4}\frac{C^{\mathcal{T}^0}_{\lceil (1-\delta)n \rceil}}{\sqrt{m_n}} \right)  \right| \right]\underset { n \to \infty} {\longrightarrow} 0.$$
\end{lemma}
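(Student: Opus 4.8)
The plan is to reduce the statement to a comparison between $Y_{\lceil(1-\delta)n\rceil}$ and $\frac{3\sigma^2}{4}C^{\mathcal{T}^0}_{\lceil(1-\delta)n\rceil}$, and then to show that these two quantities are close with overwhelming probability. Write $k=\lceil(1-\delta)n\rceil$ for brevity. The function $f_\delta$ is bounded and, being $C^1$ with bounded derivative, Lipschitz, say with constant $L_\delta$; hence
$$\left|f_\delta\!\left(\frac{Y_k}{\sqrt{m_n}}\right)-f_\delta\!\left(\frac{3\sigma^2}{4}\frac{C^{\mathcal{T}^0}_k}{\sqrt{m_n}}\right)\right|\leq L_\delta\,\frac{1}{\sqrt{m_n}}\left|Y_k-\frac{3\sigma^2}{4}C^{\mathcal{T}^0}_k\right|,$$
and this difference is also trivially bounded by $2\|f_\delta\|_\infty$. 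The whole estimate will then follow by splitting the expectation over the event where $|Y_k-\frac{3\sigma^2}{4}C^{\mathcal{T}^0}_k|$ is small and its complement.

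The key step, and the main obstacle, is to prove that for any $\epsilon\in(0,1/4)$ there are $\epsilon'>0$ and $n_1$ such that, for $n\geq n_1$,
$$P\!\left(\{N\geq k\}\cap\Big\{\big|Y_k-\tfrac{3\sigma^2}{4}C^{\mathcal{T}^0}_k\big|>n^{1/4+\epsilon}\Big\}\right)\leq \exp\!\left(-n^{\epsilon'}\right).$$
To obtain this I would transport the problem to the random walk $S$ via the identity in law $(Y_0,\dots,Y_{N+1})\stackrel{d}{=}(S_0,\dots,S_\tau)$ under $P_1$: on $\{N\geq k\}$, which corresponds to $\{\tau>k\}$, the pair $(Y_k,C^{\mathcal{T}^0}_k)$ is distributed as $(S_k,R_k)$, by comparing \eqref{lienCY} with \eqref{lienRS}. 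Now $R_k=F_k(\hat S^k)$ is the number of strict records of the time-reversed walk $\hat S^k$, while its running maximum is $\sup_{0\leq j\leq k}\hat S^k_j=S_k-I_k+1$; crucially, on $\{\tau>k\}$ the walk stays positive and $S_0=1$, so that $I_k=1$ and the maximum is exactly $S_k$. Since $\hat S^k\stackrel{d}{=}S$, applying Lemma \ref{1.11randomtrees} to $\hat S^k$ with scale parameter $m=k$ and time index $l=k$ controls $\big|\sup_{0\leq j\leq k}\hat S^k_j-\frac{3\sigma^2}{4}R_k\big|=\big|S_k-\frac{3\sigma^2}{4}R_k\big|$ (on $\{\tau>k\}$) by $k^{1/4+\epsilon}\leq n^{1/4+\epsilon}$, outside a set of probability at most $\exp(-k^{\epsilon'})$. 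Translating back through the correspondence gives the displayed bound.

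Finally I would combine the two ingredients. On the good event $G_n=\{|Y_k-\frac{3\sigma^2}{4}C^{\mathcal{T}^0}_k|\leq n^{1/4+\epsilon}\}$ the Lipschitz bound makes the difference of order $n^{1/4+\epsilon}/\sqrt{m_n}$, and since $m_n\sim\delta n$ and $P(N\geq k)=O(n^{-1/2})$ by \eqref{estimationpopu}, the corresponding contribution to $\sqrt{n}\,E[\cdots]$ is of order $n^{1/2}\cdot n^{1/4+\epsilon}\cdot n^{-1/2}\cdot n^{-1/2}=n^{\epsilon-1/4}\to0$, using $\epsilon<1/4$. On the complementary event the difference is at most $2\|f_\delta\|_\infty$, while the key estimate bounds $P(\{N\geq k\}\cap G_n^c)$ by $\exp(-n^{\epsilon'})$, so that contribution is at most $2\|f_\delta\|_\infty\,\sqrt{n}\,\exp(-n^{\epsilon'})\to0$. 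Adding the two bounds yields the lemma. The delicate point throughout is the time-reversal identification $\sup_{0\leq j\leq k}\hat S^k_j=S_k$ under positivity, which is precisely what converts the record count $C^{\mathcal{T}^0}_k$ into an approximation of the walk value $Y_k$ through Lemma \ref{1.11randomtrees}.
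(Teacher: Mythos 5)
Your proof is correct and follows essentially the same route as the paper: transfer to the random walk via the identity in law between $(Y_k,C^{\mathcal{T}^0}_k)$ and $(S_k,R_k)$ on $\{\tau>k\}$, use time reversal together with Lemma \ref{1.11randomtrees} and the observation that $I_k=1$ (hence $S_k-I_k+1=S_k$) on $\{\tau>k\}$, and then split the expectation using that $f_\delta$ is bounded and Lipschitz, with $P(N\geq\lceil(1-\delta)n\rceil)=O(n^{-1/2})$ killing the main term. The paper's display \eqref{1.11reformule} is exactly your reformulated estimate, so the two arguments coincide step for step.
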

\begin{proof}
From the fact that $(Y_0, \dots Y_{N+1})$  has the same distribution 
as $(S_0, \dots, S_\tau)$ under $P_1$, and formula \eqref{lienCY}, we get that the distribution of  $(Y_{\lceil (1-\delta)n \rceil}, C^{\mathcal{T}^0}_{\lceil (1-\delta)n \rceil},N)$ conditionally on $\{N \geq \lceil (1-\delta)n \rceil \}$ is the same as the distribution of $(S_{\lceil (1-\delta)n \rceil}, R_{\lceil (1-\delta)n \rceil},\tau-1)$ 
under $P_1$ conditionally on $\{\tau > \lceil (1-\delta)n \rceil \}$. 
Thus the left-hand side of \eqref{cvgfCY} can be written as
$$\sqrt{n}\, E_1\!\left[\mathbf{1}_{\{\tau  > \lceil (1-\delta)n \rceil\}} \left|f_\delta\!\left(\frac{S_{\lceil (1-\delta)n \rceil}}{\sqrt{m_n}} \right)-f_\delta\!\left( \frac{3 \sigma^2}{4}\frac{R_{\lceil (1-\delta)n \rceil}}{\sqrt{m_n}} \right)  \right| \right].$$
By time reversal, the following identity in distribution holds
under $P_1$, for $0 \leq l \leq m$ : $$(S_l-I_l,R_l)\,\mathop{=}^{(d)}\,(M_l-1,K_l).$$ 
So Lemma \ref{1.11randomtrees} can be rephrased as follows.
Let $\epsilon \in (0,1/4)$. We can find $\epsilon'>0$ and $n_0 \geqslant 1$ such that for $m \geqslant n_0$ and $ l \in \lbrace 0, \dots, m \rbrace$, we have
\begin{equation}
\label{1.11reformule}
P_1\left( \left|\frac{S_l-I_l+1}{\sqrt{m}}- \frac{3 \sigma^2}{4} \frac{R_{l}} {\sqrt{m}} \right|> m^{-1/4+\epsilon} \right) < \exp\left( -m^{\epsilon'} \right).
\end{equation} 
Then, since the function $f_{\delta}$ is bounded and Lipschitz, we have
$$ \begin{aligned}
&\sqrt{n}\, E_1\!\left[\mathbf{1}_{\{\tau > \lceil (1-\delta)n \rceil\}} \left|f_\delta\!\left(\frac{S_{\lceil (1-\delta)n \rceil}}{\sqrt{m_n}} \right)-f_\delta\!\left( \frac{3 \sigma^2}{4}\frac{R_{\lceil (1-\delta)n \rceil}}{\sqrt{m_n}} \right)  \right| \right] \\
&\leq \sqrt{n}\, K_{\delta} \,E_1\! \left[\mathbf{1}_{\{\tau > \lceil (1-\delta)n \rceil\}} \left( \left| \frac{S_{\lceil (1-\delta)n \rceil}-\frac{3\sigma^2}{4} R_{\lceil (1-\delta)n \rceil}}{\sqrt{m_n}} \right| \wedge 1 \right)  \right]. \\
\end{aligned}$$
where the constant $K_\delta$ only depends on $\delta$. 
It follows that$$ \begin{aligned}
&\sqrt{n} \,E_1\!\left[\mathbf{1}_{\{\tau > \lceil (1-\delta)n \rceil\}} \left|f_\delta\!\left(\frac{S_{\lceil (1-\delta)n \rceil}}{\sqrt{m_n}} \right)-f_\delta\!\left( \frac{3 \sigma^2}{4}\frac{R_{\lceil (1-\delta)n \rceil}}{\sqrt{m_n}} \right)  \right| \right] \\
&\leq \sqrt{n}\, K_{\delta}\, \frac{1}{\sqrt{m_n}} n^{1/4+\epsilon}  E_1[\mathbf{1}_{\{\tau > \lceil (1-\delta)n \rceil\}}] \\
&+ \sqrt{n} \,K_{\delta} \,P_1 \left(\left| S_{\lceil (1-\delta)n \rceil}-\frac{3\sigma^2}{4} R_{\lceil (1-\delta)n \rceil}\right| > n^{1/4+\epsilon}, \tau > \lceil (1-\delta)n \rceil  \right). \\
\end{aligned}$$ 
The first term in the sum tends to $0$ as $n\to\infty$ thanks to \eqref{estimationpopu}. We then use the fact that $I_{\lceil (1-\delta)n \rceil}=1$ on the event $\{\tau > \lceil (1-\delta)n \rceil \}$ and the bound \eqref{1.11reformule} to
see that the second term also tends to $0$. We thus get
$$ \sqrt{n} \,E_1\!\left[\mathbf{1}_{\{\tau > \lceil (1-\delta)n \rceil\}} \left|f_\delta\!\left(\frac{S_{\lceil (1-\delta)n \rceil}}{\sqrt{m_n}} \right)-f_\delta\!\left( \frac{3 \sigma^2}{4}\frac{R_{\lceil (1-\delta)n \rceil}}{\sqrt{m_n}} \right)  \right| \right] \underset {n \to \infty} {\longrightarrow} 0$$
and our claim follows. 
\end{proof}

It follows from Lemmas \ref{cvgphiY} and \ref{cvgfCY} that
\begin{equation}
\label{cvgphiC}
\lim_{n\to\infty} \sqrt{n} \,E\! \left[ \left| n \Phi_n(Y_{\lceil (1-\delta) n \rceil})-f_\delta\! \left( \frac{3 \sigma^2}{4} \frac{C^{\mathcal{T}^0}_{\lceil (1-\delta)n \rceil}} {\sqrt{m_n}} \right) \right|   \mathbf{1}_{\{N \geq \lceil (1-\delta)n \rceil\}} \right]=0.
\end{equation}
From \eqref{expressiondebut} and \eqref{phi}, we now obtain
\begin{align}
\label{endfirststep}
&\lim_{n\to\infty}\Bigg|n^{3/2}E\!\left[\Psi \left( \left( \frac{1}{\sqrt{n}} C^{\mathcal{T}^0}_{nt}, \frac{1}{n^{1/4}} L^{\mathcal{T}^0}_{nt} \right), 0 \leq t \leq1-\delta \right) \mathbf{1}_{\lbrace N=n \rbrace} \right]\\
&- \sqrt{n}\,E\!\Bigg[\Psi \left( \left( \frac{1}{\sqrt{n}} C^{\mathcal{T}^0}_{nt}, \frac{1}{n^{1/4}} L^{\mathcal{T}^0}_{nt} \right), 0 \leq t \leq 1-\delta \right) f_\delta\! \left( \frac{3 \sigma^2}{4} \frac{C^{\mathcal{T}^0}_{\lceil (1-\delta)n \rceil}} {\sqrt{m_n}} \right)\mathbf{1}_{\{N \geq \lceil (1-\delta)n \rceil\}} \Bigg] \Bigg|=0.\notag
\end{align}

\subsection*{Second step}
In view of \eqref{endfirststep}, we now need to get a limit in distribution for the (rescaled) pair $(C^{\mathcal{T}^0}_{nt}, L^{\mathcal{T}^0}_{nt})_{0\leq t\leq 1-\delta}$ 
conditioned on the event $\{N \geq \lceil (1-\delta)n \rceil\}$. This is the goal of the next lemma, which is essentially a consequence of results found in 
\cite{Mtrees}. 

\begin{lemma}
\label{cvggregory}
Let $a>0$. The law under $P(. |N \geq an)$ of the process
$$ \left( \left(\frac{1}{\sqrt{n}} C^{\mathcal{T}^0}_{(nt)\wedge N}, \frac{1}{n^{1/4}} L^{\mathcal{T}^0}_{(nt)\wedge N} \right), t\geq 0 \right)$$ converges when $n \to \infty$ to the law of   $$\left(\left( \frac{1}{\tilde{\sigma}} \mathbf{e}^{(a)}_t, \Sigma \sqrt{\frac{2}{\tilde{\sigma}}} Z^{(a)}_t  \right), t\geq 0 \right)$$
where  $\mathbf{e}^{(a)}$ is a Brownian excursion conditioned to have duration greater than $a$, $Z^{(a)}$ is the Brownian snake driven by this excursion, and the constants are given by  
$$\tilde{\sigma}= \frac{9}{4 \sqrt{2}}\ ,\quad\Sigma= \sqrt{\frac{9}{8}}.$$
\end{lemma}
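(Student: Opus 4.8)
The plan is to establish the two coordinates of the limit—the contour part $\frac{1}{\tilde\sigma}\mathbf{e}^{(a)}$ and the label part $\Sigma\sqrt{2/\tilde\sigma}\,Z^{(a)}$—by different means and then to combine them into a joint statement. The contour part I would obtain directly from the random walk $S$, exploiting that, by the distributional identity between $(Y_0,\dots,Y_{N+1})$ and $(S_0,\dots,S_\tau)$ together with \eqref{lienCY} and \eqref{lienRS}, under $P(\cdot\mid N\geq an)$ the pair $(Y_{(nt)\wedge N},C^{\mathcal{T}^0}_{(nt)\wedge N})$ has the same law as $(S_{(nt)\wedge\tau},R_{(nt)\wedge\tau})$ under $P_1(\cdot\mid\tau>an)$. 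The label part is where the Brownian snake genuinely enters, and there I would invoke the invariance principle of \cite{Mtrees}, whose ``conditional'' nature (the labels converge once the genealogy does) is what makes the soft conditioning $\{N\geq an\}$ harmless.

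For the contour coordinate, recall that on $\{\tau>l\}$ one has $I_l=1$, so the reformulation \eqref{1.11reformule} of Lemma \ref{1.11randomtrees} controls $|S_l-\frac{3\sigma^2}{4}R_l|$ by $l^{1/4+\epsilon}$ outside a stretched-exponentially rare event. A union bound over $l\leq\lceil an\rceil$ (together with the polynomial estimate \eqref{estimationpopu} for $P(N\geq an)$) upgrades this to $\sup_{t\geq0}\big|n^{-1/2}S_{(nt)\wedge\tau}-\frac{3\sigma^2}{4}\,n^{-1/2}R_{(nt)\wedge\tau}\big|\to0$ in $P_1(\cdot\mid\tau>an)$-probability. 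It then remains to prove that $(n^{-1/2}S_{(nt)\wedge\tau})_{t\geq0}$ converges under $P_1(\cdot\mid\tau>an)$ to $\sigma\,\mathbf{e}^{(a)}$, which is the classical invariance principle for a centered, finite-variance walk conditioned to stay positive for a time of order $n$: its excursion converges to a Brownian excursion conditioned to have duration larger than $a$. Carrying the factor $\frac{4}{3\sigma^2}$ through this limit gives the contour constant $\frac{4}{3\sigma^2}\cdot\sigma=\frac{4}{3\sigma}=\frac{4\sqrt2}{9}=\frac{1}{\tilde\sigma}$, since $\sigma^2=9/2$.

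For the label coordinate, I would first observe that, conditionally on $\mathcal{T}$, the increments $\ell(v_{i+1})-\ell(v_i)$ along the white contour are exchangeable, centered admissible increments around the black vertices, with moments controlled uniformly in the vertex degree because $\mu_1$ has exponential tails. The key computation is the variance $\beta^2$ of the label displacement accumulated along a single white genealogical edge, obtained by averaging the relevant second moment over $\mu_1$ and over the position of the child around a black vertex; I expect this to yield $\beta^2=9/4$, equivalently $\Sigma=\sqrt{9/8}$ through $\beta^2=2\Sigma^2$. Since the conditional variance of $\ell(v_i)$ grows like $\beta^2\,C^{\mathcal{T}^0}_i$, matching with $\mathrm{Var}(\,c\,Z^{(a)}_t\mid\mathbf{e}^{(a)})=c^2\mathbf{e}^{(a)}_t$ forces the snake scale $c=\sqrt{\beta^2/\tilde\sigma}=\Sigma\sqrt{2/\tilde\sigma}=2^{1/4}$, consistent with the statement. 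The invariance principle of \cite{Mtrees} (see also \cite{MM}) then provides the joint convergence of the rescaled contour and label functions to $(\frac{1}{\tilde\sigma}\mathbf{e}^{(a)},\Sigma\sqrt{2/\tilde\sigma}\,Z^{(a)})$.

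The main obstacle I anticipate is reconciling the conditioning: \cite{Mtrees} conditions on the number of vertices of one fixed type, whereas Lemma \ref{cvggregory} conditions on the total number of edges through the soft event $\{N\geq an\}$. I would handle this by separating the two mechanisms—using the random-walk representation above to force convergence of the genealogy to $\mathbf{e}^{(a)}$ under $\{N\geq an\}=\{\tau>an\}$, and using only the genealogy-conditional part of \cite{Mtrees} (tightness and finite-dimensional convergence of the label process given the tree, with the constant $\beta^2$) to add the label coordinate, so that the exact conditioning on a vertex count is never needed. Verifying that this conditional label convergence is uniform enough to combine with the walk limit, and checking the moment hypotheses of \cite{Mtrees} for our displacement law, are the technical points that will require the most care.
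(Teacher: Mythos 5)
Your contour argument is essentially sound, and the paper itself confirms its viability: the remark following the proof of Theorem \ref{cvgcontourlabel} notes that the contour function alone can be treated directly from \eqref{lienCY} following Marckert and Mokkadem \cite{MaMo}. (One slip: your union bound over $l\leq\lceil an\rceil$ does not suffice, since the supremum is over all $t\geq 0$ and $\tau$ is unbounded on $\{\tau>an\}$; you must first truncate at $\tau\leq Tn$ using $P_1(\tau>Tn\mid \tau>an)\lesssim\sqrt{a/T}$ and then apply \eqref{1.11reformule} for all $l\leq Tn$.) Your constant is also right: averaging $2i(p-i+1)/(p+2)$ over $i$ uniform in $\{1,\dots,p\}$ and over the \emph{size-biased} law of $p$ gives $\beta^2=\frac{1}{3m_1}E_{\mu_1}[P(P+1)]=\frac{9}{4}$, consistent with $\Sigma\sqrt{2/\tilde\sigma}=2^{1/4}$.

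The genuine gap is in the label coordinate. There is no ``genealogy-conditional part of \cite{Mtrees}'' that can be invoked as a black box: the theorems of \cite{Mtrees} concern forests of i.i.d.\ multitype trees, or trees conditioned on the number of vertices of a fixed type; none of them states that, conditionally on the trees, label convergence follows from convergence of the contour functions. Nor can such a statement be true at that level of generality: the limiting label field is governed not by the contour limit alone but by a law of large numbers for the empirical statistics of pairs (black-vertex degree, position of the child) along ancestral lines, and one must prove that these statistics homogenize to their size-biased values, and that the rescaled labels are tight, \emph{under the conditioning} $\{N\geq an\}$. That is precisely the difficulty created by conditioning on the total size — the central issue of the paper — and your sketch assumes it away by attributing it to a citation that does not contain it. The paper's proof rests on a device your proposal is missing: the labeled tree conditioned on $\{N\geq an\}$ has the same law as the \emph{first} tree with at least $an$ edges in an i.i.d.\ sequence of labeled trees, so its height and label processes are the first excursion of length $\geq an$ of the concatenated processes $H^\infty,\tilde L^\infty$. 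Theorems 1 and 3 of \cite{Mtrees} apply directly to this unconditioned forest, giving the joint convergence \eqref{cvgforest} to reflected Brownian motion and its snake, and extraction of the first excursion of duration $>a$, as in \cite[Corollary 1.13]{randomtrees}, yields both coordinates of the lemma simultaneously — note this makes your random-walk argument for the contour redundant. To complete your route instead, you would need a lineage homogenization lemma with stretched-exponentially small failure probability (so that it survives division by $P(N\geq an)\asymp n^{-1/2}$), a conditional Lindeberg central limit theorem, and conditional tightness estimates: a substantial re-derivation of the arguments of \cite{Mtrees}, not a verification of its hypotheses.
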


\begin{proof}
To relate the convergence of the lemma to the results of \cite{Mtrees}, we first recall the contour function $C^\mathcal{T}$  and introduce a label function $L^\mathcal{T}$ defined as follows. If $(u_0,u_1,\ldots,u_{2N})$ is the contour sequence of $\mathcal{T}$, we already saw that $C^\mathcal{T}_i=|u_i|$
and we put $L^\mathcal{T}_i=\ell(u_i)$, for every $i\in\{0,1,\ldots,2N\}$, where by convention we have assigned to each black vertex the label of its parent. 
We then interpolate linearly to define $C^\mathcal{T}_t$ and $L^\mathcal{T}_t$ for every real $t\in[0,2N]$.
It is then enough to verify that the convergence of the lemma holds 
when $(n^{-1/2}C^{\mathcal{T}^0}_{(nt)\wedge N}, n^{-1/4} L^{\mathcal{T}^0}_{(nt)\wedge N})_{t\geq 0}$ is replaced by $(2^{-1}n^{-1/2}C^{\mathcal{T}}_{(2nt)\wedge (2N)}, n^{-1/4} L^{\mathcal{T}}_{(2nt)\wedge (2N)})_{t\geq 0}$ (see Remark \ref{rqCt}).

We also introduce the
variant of the contour function called the height function, and the corresponding variant of the label function.
The height function of $\mathcal{T}$ is defined by setting $H^\mathcal{T}_i= |w_i|$ for $0\leq i\leq N$, where $w_0,w_1,\ldots,w_N$ are the vertices
of $\mathcal{T}$ listed in lexicographical order, and the modified label function is defined by $\tilde L^\mathcal{T}_i=\ell(w_i)$ (again  we assign to each black vertex the label of its parent).
By convention we set $H^\mathcal{T}_{N+1}=0$ and $\tilde L^\mathcal{T}_{N+1}=0$.
Both $H^\mathcal{T}$ and $\tilde L^\mathcal{T}$ are interpolated linearly to give processes indexed by $[0,N+1]$. Then we may
replace $(2^{-1}n^{-1/2}C^{\mathcal{T}}_{(2nt)\wedge (2N)}, n^{-1/4} L^{\mathcal{T}}_{(2nt)\wedge (2N)})_{t\geq 0}$ by 
$(2^{-1}n^{-1/2}H^{\mathcal{T}}_{(nt)\wedge (N+1)}, n^{-1/4} \tilde L^{\mathcal{T}}_{(nt)\wedge (N+1)})_{t\geq 0}$. Indeed it is well known that 
asymptotics for the height functions, of the type of the convergence \eqref{cvgforest}, imply similar asymptotics for the contour functions (and similarly
for the label functions) modulo an extra multiplicative factor $2$ in the time scaling. See e.g. Section 1.6 in \cite{randomtrees} for a precise justification in
a slightly different setting. In the case of Galton-Watson trees with a fixed size, the fact that the height process and the contour function converge jointly to the same Brownian excursion is due to Marckert and Mokkadem in \cite{MaMo}.

Consider then a sequence $(\mathcal{T}_{(k)},(\ell_{(k)}(u))_{u\in\mathcal{T}^0_{(k)}})_{k\geq 1}$ of independent labeled trees distributed as $(\mathcal{T},(\ell(u))_{u\in\mathcal{T}^0})$. Set $N_{(k)}=|\mathcal{T}_{(k)}|$ for every $k\geq 1$.
Define the height function $H^\infty$, respectively the label function $\tilde L^\infty$, by concatenating the height functions $(H^{\mathcal{T}_{(k)}}_t)_{0\leq t\leq N_{(k)}+1}$,
resp. the label functions $(\tilde L^{\mathcal{T}_{(k)}}_t)_{0\leq t\leq N_{(k)}+1}$. Then a very special case of Theorems 1 and 3 in \cite{Mtrees} gives the
convergence in distribution
\begin{equation}
\label{cvgforest}
\left( \left(\frac{1}{\sqrt{n}} H^{\infty}_{nt}, \frac{1}{n^{1/4}} \tilde L^{\infty}_{nt} \right), t\geq 0 \right)
\build{\longrightarrow}_{n\to\infty}^{(d)}\left(\left( \frac{2}{\tilde{\sigma}} \beta_t, \Sigma \sqrt{\frac{2}{\tilde{\sigma}}} W_t  \right), t\geq 0 \right)
\end{equation}
where $\beta$ is a standard reflected linear Brownian motion, and $W$ is the Brownian snake driven by $\beta$. Furthermore, the constants $\tilde\sigma$ and $\Sigma$ are as in the statement of the lemma. 

Let us comment on the numerical values of the constants $\tilde\sigma$ and $\Sigma$. Both these constants can be calculated using the 
formulas found in \cite{Mtrees}. More precisely, $\tilde\sigma$ is evaluated from formula (2) in \cite{Mtrees}, using also the numerical
values $\sigma_0^2=3/4$ and $\sigma_1^2=15/2$ for the respective variances of $\mu_0$ and $\mu_1$. Similarly, $\Sigma$ is computed from
the formula in \cite[Theorem 3]{Mtrees}. When applying this formula, we need to calculate the variance of the difference 
between the label of the $i$-th child of a black vertex and the label of the parent of this black vertex, conditionally on the 
event that the black vertex in consideration has $p$ children (with of course $p\geq i$).  This variance is equal to $2i(p-i+1)/(p+2)$, by
a calculation found on page 1664 of \cite{MM}. The remaining part of the calculation is straightforward, and we leave the
details to the reader.

Finally we observe that if $K=\min\{k\geq 1: N_{(k)}\geq an\}$, the law of the labeled tree $(\mathcal{T}_{(K)},(\ell_{(K)}(u))_{u\in\mathcal{T}^0_{(K)}})$ is the same as
the conditional law  $(\mathcal{T},(\ell(u))_{u\in\mathcal{T}^0})$ knowing that $N\geq an$. On the other hand, the process $(n^{-1/2} H^{\mathcal{T}_{(K)}}_{nt})_{0\leq t\leq n^{-1}(N_{(K)}+1)}$ corresponds to the first excursion of $(n^{-1/2}H^{\infty}_{nt})_{t\geq 0}$ away from $0$ with length greater than or equal to $a+n^{-1}$. 
By arguments very similar to \cite[Proof of Corollary 1.13]{randomtrees}, we deduce from \eqref{cvgforest} that $(n^{-1/2} H^{\mathcal{T}_{(K)}}_{(nt)\wedge (N_{(K)}+1)})_{t\geq 0}$
converges in distribution to the first excursion of $( \frac{2}{\tilde{\sigma}} \beta_t)_{t\geq 0}$ away from $0$ with duration greater than $a$. This gives the convergence of the first component in Lemma \ref{cvggregory}. The convergence of the second component (and the fact that it holds jointly with the first one) is obtained by the same
argument. 
\end{proof}

By \eqref{estimationpopu}, we have
$$n^{3/2}P(N=n)\build{\longrightarrow}_{n\to\infty}^{} \frac{1}{\sigma\sqrt{2\pi}}\ ,\quad\sqrt{n}\,P(N\geq \lceil (1-\delta)n\rceil) \build{\longrightarrow}_{n\to\infty}^{} \frac{2}{\sigma \sqrt{2\pi}}\,(1-\delta)^{-1/2}.$$
From \eqref{endfirststep} and Lemma \ref{cvggregory}, we now get
\begin{align*}
&\lim_{n\to\infty}E\!\left[\Psi \left( \left( \frac{1}{\sqrt{n}} C^{\mathcal{T}^0}_{nt}, \frac{1}{n^{1/4}} L^{\mathcal{T}^0}_{nt} \right), 0 \leq t \leq1-\delta \right) \,\Bigg|\, N=n \right]\\
&=2(1-\delta)^{-1/2}E\!\Bigg[\Psi \left( \left(\frac{1}{\tilde{\sigma}} \mathbf{e}^{(1-\delta)}_t, \Sigma \sqrt{\frac{2}{\tilde{\sigma}}} Z^{(1-\delta)}_t\right)  , 0\leq t\leq 1-\delta \right)
f_\delta\!\left(\frac{3}{\sqrt{2\delta}} \mathbf{e}^{(1-\delta)}_{1-\delta}\right)\Bigg]\\
&=E\!\Bigg[\Psi \left( \left(\frac{1}{\tilde{\sigma}} \mathbf{e}^{(1-\delta)}_t, \Sigma \sqrt{\frac{2}{\tilde{\sigma}}} Z^{(1-\delta)}_t 
\right) , 0\leq t\leq 1-\delta \right)
g_\delta\!\left(\mathbf{e}^{(1-\delta)}_{1-\delta}\right)\Bigg],
\end{align*}
where, for every $x\geq 0$,
$$g_\delta(x)= 2(1-\delta)^{-1/2}\,f_\delta\!\left(\frac{3}{\sqrt{2\delta}}\,x\right).$$
Recalling the definition of $f_\delta$, and the fact that $\sigma^2=9/2$, we obtain
$$g_\delta(x)= \frac{2x}{\sqrt{2 \pi \delta^3(1-\delta)}} \exp\left(- \frac{x^2}{2 \delta} \right).$$
It is well known (see formula (1) in \cite{Ito}) that the function $\omega\longrightarrow g_\delta(\omega(1-\delta))$ is the density (on the space $\mathcal{C}(\R_+,\R_+)$) of the
law of the normalized Brownian excursion  with respect to the law of the Brownian excursion conditioned to have length greater than $1-\delta$, 
on the $\sigma$-field generated by the coordinates up to time $1-\delta$. Hence we conclude that we have also
\begin{align}
\label{cvgtheor}
&\lim_{n\to\infty}E\!\left[\Psi \left( \left( \frac{1}{\sqrt{n}} C^{\mathcal{T}^0}_{nt}, \frac{1}{n^{1/4}} L^{\mathcal{T}^0}_{nt} \right), 0 \leq t \leq1-\delta \right) \,\Bigg|\, N=n \right]\\
&=E\!\Bigg[\Psi \left(\left( \frac{1}{\tilde{\sigma}} \mathbf{e}_t, \Sigma \sqrt{\frac{2}{\tilde{\sigma}}} Z_t \right) , 0\leq t\leq 1-\delta \right)
\Bigg],\notag
\end{align}
where $\mathbf{e}$ and $Z$ are as in the statement of Theorem \ref{cvgcontourlabel}. Since this holds for every $\delta\in(0,1)$ and since
we have $C^{\mathcal{T}^0}_{n}=L^{\mathcal{T}^0}_{n}=0$ on the event $\{N=n\}$, we have obtained the convergence of finite-marginal
distributions in the convergence of Theorem \ref{cvgcontourlabel} (note that $ \frac{1}{\tilde{\sigma}}=\frac{4\sqrt{2}}{9}$ and $\Sigma \sqrt{\frac{2}{\tilde{\sigma}}}=2^{1/4}$).

To complete the proof, we still need a tightness argument. But tightness holds if we restrict our processes to $[0,1-\delta]$ by
\eqref{cvgtheor}, and we can then use a time-reversal argument. Indeed $(C^{\mathcal{T}^0}_0,C^{\mathcal{T}^0}_1,\ldots,
C^{\mathcal{T}^0}_n)$ and  $(C^{\mathcal{T}^0}_n,C^{\mathcal{T}^0}_{n-1},\ldots,
C^{\mathcal{T}^0}_0)$ have the same distribution under $P(\cdot\,|\, N=n)$. The similar property does not
hold for the label process, but  $(L^{\mathcal{T}^0}_n,L^{\mathcal{T}^0}_{n-1},\ldots,
L^{\mathcal{T}^0}_0)$ corresponds to the label process for a (conditioned) tree where labels would be
generated by using the counterclockwise order instead of 
the clockwise order, in the constraints of the definition of a labeled tree in subsection 2.1. Clearly, our
arguments would go through with this different convention, and so we get the desired tightness also for the
label process. This completes the proof of Theorem \ref{cvgcontourlabel}. 

\endproof

\begin{remark}
The difficulty in proving Theorem \ref{cvgcontourlabel} comes from the convergence of labels. If we had been interested only in the convergence of the rescaled contour functions $ \left(\frac{1}{\sqrt{n}} C^{\mathcal{T}^0}_{nt}\right)$, we could have used formula \eqref{lienCY} more directly, following the ideas of Marckert and Mokkadem \cite{MaMo}. See also \cite[Chapter 1]{randomtrees}.
\end{remark}

\section{Convergence towards the Brownian map for rooted and pointed maps}
\label{CvgCarte}

Recall that $\mathcal{M}_n^{\bullet}$ is a random bipartite planar map uniformly distributed over the set $\mathbf{M}^{b \bullet}_n$ of all bipartite planar rooted and pointed maps with $n$ edges. In this section, we prove the analog of Theorem \ref{cvgcarte} when $\mathcal{M}_n$ is replaced by $\mathcal{M}_n^{\bullet}$, namely
\begin{equation}
\label{cvgcartepointee}
\left(V(\mathcal{M}_n^{\bullet}) , 2^{-1/4} n^{-1/4} d_{\rm{gr}}^{\mathcal{M}_n^{\bullet}}\right)  \build{\longrightarrow}_{n \to \infty}^{(d)} (\mathbf{m}_{\infty}, D^*) 
\end{equation}
 where $(\mathbf{m}_{\infty}, D^*)$ is the Brownian map.

\subsection{Definition of the Brownian map}
We define the Brownian map following \cite[Sect.2.4]{uniqueness}.
We first need to introduce the CRT (Continuous Real Tree). Let $(\mathbf{e}_s)_{0 \leq s \leq 1}$ be a normalized Brownian excursion. For $s, t \in [0,1]$, we set
$$ d_{\mathbf{e}}(s,t)=\mathbf{e}_s+\mathbf{e}_t-2 \min \{ \mathbf{e}_r: s \wedge t \leq r \leq s \vee t \}.$$
We notice that $d_{\mathbf{e}}$ is a random pseudo-metric on $[0,1]$. Consider the equivalence relation defined for $s, t \in [0,1]$ by
$$ s \sim_{\mathbf{e}} t \ \text{iff} \ d_{\mathbf{e}}(s,t)=0.$$
The CRT is then the quotient space  $\mathcal{T}_{\mathbf{e}}= [0,1] /\sim_{\mathbf{e}}$, which is equipped with the distance induced by $d_{\mathbf{e}}$.
We denote the canonical projection $[0,1] \rightarrow \mathcal{T}_{\mathbf{e}}$ by $p_{\mathbf{e}}$.

We then let $Z=(Z_s)_{0 \leq s \leq 1}$ be the Brownian snake
driven by $\mathbf{e}$, as in Theorem \ref{cvgcontourlabel}. We note that $Z_0=0$ and $E((Z_s-Z_t)^2 |\mathbf{e})=d_{\mathbf{e}}(s,t)$. 
From the last relation, one obtains that $Z_s=Z_t$ for every $s,t \in [0,1]$ such that $d_{\mathbf{e}}(s,t)=0$, a.s. Thus the process $Z$ can be viewed as indexed by the CRT $\mathcal{T}_{\mathbf{e}}$, in such a way that $Z_s=Z_{p_{\mathbf{e}(s)}}$ for $ s \in [0,1]$. In the sequel, we will use the notation $Z_s=Z_a$ if $s \in [0,1]$ and $a=p_{\mathbf{e}}(s)$. 
Using similar techniques as in the proof of the Kolmogorov regularity theorem, one can show that the mapping $a \mapsto Z_a$ is H\"older continuous with exponent  $\frac{1}{2}-\epsilon$ with respect to $d_{\mathbf{e}}$, for every $\epsilon \in ]0, \frac{1}{2}[$. The pair $(\mathcal{T}_{\mathbf{e}}, (Z_a)_{a \in \mathcal{T}_{\mathbf{e}}})$ is then a continuous analog of discrete labeled trees. 

We can now define the Brownian map, as a quotient space of the CRT. For $s,t \in [0,1]$ such that $s \leq t$, we set
$$ D^0(s,t)=D^0(t,s)=Z_s+Z_t-2\max \left(\min \{Z_r, r \in [s,t] \}, \min \{ Z_r, r \in [0,s] \cup [t,1] \}\right)$$
and for $a, b \in \mathcal{T}_{\mathbf{e}}$,
$$D^0(a,b)= \min \{D^0(s,t): (s,t) \in [0,1]^2, p_{\mathbf{e}}(s)=a, p_{\mathbf{e}}(t)=b \}.$$
Finally, for $a, b \in \mathcal{T}_{\mathbf{e}}$, let
$$D^*(a,b)= \inf \left\lbrace \sum_{i=1}^k D^0(a_{i-1},a_i) \right\rbrace$$  
where the infimum is over all choices of the integer $k \geq 1$ and of the finite sequence $(a_0, \dots a_k)$ of elements of $\mathcal{T}_{\mathbf{e}}$ such that $a_0=a$ and $a_k=b$. Then, $D^*$ is a pseudo-metric on the CRT $\mathcal{T}_{\mathbf{e}}$, which satisfies  $D^* \leq D^0$. One can also interpret $D^*$ as a function on $[0,1]^2$ by setting $D^*(s,t)=D^*(p_{\mathbf{e}}(s), p_{\mathbf{e}}(t))$ for $(s,t) \in [0,1]^2$. Let  $\simeq$ be the equivalence relation on $\mathcal{T}_{\mathbf{e}}$ given by
$$ a \simeq b \ \text{iff} \  D^*(a,b)=0.$$ 
We set
$$\mathbf{m}_{\infty}=\mathcal{T}_{\mathbf{e}} / \simeq$$
and let $\Pi : \mathcal{T}_{\mathbf{e}} \rightarrow \mathbf{m}_{\infty}$ be the canonical projection. The Brownian map is the space $\mathbf{m}_{\infty}$ equipped with the distance induced by $D^*$. 

\subsection{Proof of the convergence towards the Brownian map}
As previously, we let $(\mathcal{T}_n, (\ell_n(v))_{v \in \mathcal{T}^0_n})$ be the random  labeled tree associated with $\mathcal{M}_n^{\bullet}$ via the BDG bijection. Recall that $\mathcal{T}_n$ is a two-type Galton-Watson tree with offspring distributions $\mu_0$ and $\mu_1$, conditioned to have $n$ edges.  We use the notation $(v^n_0, \dots, v^n_n)$ for the white contour sequence of $\mathcal{T}_n$. Recall that the white vertices in $\mathcal{T}_n$ are identified to vertices of the map $\mathcal{M}_n^{\bullet}$. For $ (i,j) \in \{0, \dots n \}^2$, we set
$$ d_n(i,j)=d_{\text{gr}}^{\mathcal{M}_n^{\bullet}}(v^n_i,v^n_j).$$
We then extend this definition to noninteger values of $i$ and $j$ by putting for $s,t \in [0,n]^2$
$$\begin{aligned}
 d_n(s,t)= &(s-\lfloor s \rfloor) (t-\lfloor t \rfloor) d_n(\lceil s \rceil, \lceil t \rceil)+  (s-\lfloor s \rfloor) (\lceil t \rceil- t ) d_n(\lceil s \rceil, \lfloor t \rfloor) \\
 +& (\lceil s \rceil- s ) (t-\lfloor t \rfloor) d_n(\lfloor s \rfloor, \lceil t \rceil)+ (\lceil s \rceil - s ) (\lceil t \rceil-t ) d_n(\lfloor s \rfloor, \lfloor t \rfloor). \\
\end{aligned} $$
Recall our convention $v^n_{n+i}=v^n_i$ for $0 \leq i \leq n$.
From the bound \eqref{majolabeldistance}, we have for $0 \leq i < j \leq n$, 
\begin{align}
\label{bounddistance}
d_n(i,j)&\leq \ell_n(v^n_i) + \ell_n(v^n_j)-2 \max \lbrace \min \lbrace \ell_n(v^n_k), i \leq k \leq j \rbrace, \min \lbrace \ell_n(v^n_k), j \leq k \leq i+n \rbrace \rbrace +2\\
&=L_{i}^{\mathcal{T}^0_n}+L_{j}^{\mathcal{T}^0_n}-2\max \lbrace \min\lbrace L_{k}^{\mathcal{T}^0_n}, k\in[i,j]\rbrace, \min \lbrace L_{k}^{\mathcal{T}^0_n}, k\in[j,n]\cup[0,i]\rbrace\rbrace
\notag
\end{align}
From the last bound and the convergence in distribution of the sequence of processes  $(n^{-1/4}  L_{nt}^{\mathcal{T}^0_n})_{0 \leq t \leq 1}$ (Theorem \ref{cvgcontourlabel}), one gets that the sequence of the distributions of 
the processes
$$\Big( n^{-1/4}d_n(ns,nt),0 \leq s, t \leq 1\Big)$$ 
is tight. Using Theorem \ref{cvgcontourlabel} and Remark \ref{rqCt}, we see that we can find a sequence $(n_k)_{k \geq 1}$ tending to infinity and a continuous random process $(D(s,t))_{0 \leq s,t \leq 1}$ such that, along $(n_k)_{k \geq 1}$, the following joint convergence in distribution in $\mathcal{C}([0,1]^2, \R^3)$ holds:
\begin{equation}
\label{cvgtriplet}
\left(\frac{9}{8\sqrt{2}}\, \frac{C^{\mathcal{T}_n}_{2nt}}{n^{1/2}}, \, 2^{-1/4} \frac{L^{\mathcal{T}^0_n}_{nt}}{n^{1/4}}, \,2^{-1/4}  \frac{d_n(s,t)}{n^{1/4}} \right)_{0 \leq s,t \leq 1} \underset{ n\to \infty} {\longrightarrow} (\mathbf{e}_t,  Z_t, D(s,t))_{0 \leq s,t \leq 1}.
\end{equation}
Using the Skorokhod representation theorem (and recalling that  $(\mathcal{T}_n, (\ell_n(v))_{v \in \mathcal{T}^0_n})$
is determined by the pair $(C^{\mathcal{T}_n}, L^{\mathcal{T}^0_n})$), we may and will assume that the convergence \eqref{cvgtriplet} holds a.s. 
along the sequence $(n_k)_{k\geq 1}$. 
From the definition of $D^0(s,t)$ and the bound \eqref{bounddistance}, we obtain that
for every $ (s,t) \in [0,1]^2$, 
\begin{equation}
\label{boundDDzero}
D(s,t) \leq D^0(s,t)
\end{equation}
Similarly, a passage to the limit from the identity \eqref{lienlabeldistance} gives
\begin{equation}
\label{loiD}
D(0,t)=Z_t - \min\{Z_s:0\leq s\leq 1\},
\end{equation} for every $t \in [0,1]$, a.s.

The function $(s,t) \mapsto D(s,t)$ is clearly symmetric and satisfies the triangle inequality since the functions $d_n$ do. Moreover, the fact that 
$d_n(i,j)=0$ if $v^n_i=v^n_j$ easily implies that $D(s,t)=0$ for $s,t$ such that $s \sim_{\mathbf{e}} t$ a.s. (see the proof of
Proposition 3.3 in \cite{topostructure} for a similar
argument). Hence $D(s,t)$ only depends on $p_\mathbf{e}(s)$ and $p_\mathbf{e}(t)$, and  $D$ can be viewed as a pseudo-metric on the CRT  $\mathcal{T}_{\mathbf{e}}$, which satisfies $D(a,b) \leq D^0(a,b)$ for every $a,b \in \mathcal{T}_{\mathbf{e}}$, by
\eqref{boundDDzero}. 
Since $D$ verifies the triangle inequality, the latter bound also implies
$$ D(a,b) \leq D^{*}(a,b)$$ for every $ a,b \in \mathcal{T}_{\mathbf{e}}$ a.s.
To complete the proof, we need the next lemma.

\begin{lemma}
\label{DegalDetoile}
We have $$D(a,b)=D^*(a,b)$$ for every $a,b \in \mathcal{T}_{\mathbf{e}}$ a.s.
\end{lemma}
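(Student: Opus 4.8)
The plan is to prove the reverse inequality $D^*(a,b)\le D(a,b)$, since we already have $D(a,b)\le D^*(a,b)$ from the bound $D\le D^0$ combined with the triangle inequality for $D$. This is precisely the situation that arises in Le Gall's proof of the convergence of $p$-angulations to the Brownian map, and I would follow that strategy essentially verbatim, the key point being that all the analytic input we need about the limiting objects $(\mathbf{e},Z,D^*)$ has already been isolated in the literature and depends only on the law of the Brownian snake, which by Theorem \ref{cvgcontourlabel} is the same here as in the quadrangulation case (only the scaling constants differ). The strategy rests on the crucial observation that $D$ and $D^*$ are both pseudo-metrics on $\mathbf{m}_\infty=\mathcal{T}_{\mathbf{e}}/\!\simeq$, that the identity map from $(\mathbf{m}_\infty,D^*)$ to $(\mathbf{m}_\infty,D)$ is $1$-Lipschitz (since $D\le D^*$) and continuous, hence a homeomorphism once we know both quotient topologies coincide, and that a continuous bijection between compact spaces that is $1$-Lipschitz but fails to be an isometry must strictly decrease some distance.

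The heart of the argument is a characterization of the pairs $(a,b)$ with $D^*(a,b)=0$. First I would recall (from \cite{uniqueness}) the two fundamental facts about the Brownian map: the points $a_*=p_{\mathbf{e}}(s_*)$ where $Z$ attains its minimum play a special role, and more importantly the relation $\{D^*=0\}=\{D^0=0\}$, i.e. two points are identified in the Brownian map if and only if $D^0$ between them vanishes. Since we have shown $D\le D^0$ and $D\le D^*$, to upgrade $D\le D^*$ to equality it suffices to prove that $D$ does not identify any points that $D^*$ keeps distinct; equivalently, that $D(a,b)=0$ implies $D^*(a,b)=0$. I would establish this by arguing that the set $\{D=0\}$ can contain $\{D^*=0\}$ only, using the already-proven identity \eqref{loiD}, which says that $D(0,t)=Z_t-\min Z$ recovers the labels exactly, so that $D$ restricted to distances from the root point is nondegenerate in the same way $D^*$ is.

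Concretely, I would invoke the key re-routing lemma of \cite{uniqueness}: if $D(a,b)=0$ for some $a\neq b$ in $\mathbf{m}_\infty$, then one can produce, using \eqref{loiD} and the structure of geodesics to the distinguished point, a contradiction with the known Hausdorff dimension or with the fact that $\mathbf{m}_\infty$ under $D^*$ has the same volume measure as the pushforward of Lebesgue measure on $[0,1]$. The cleanest route is Le Gall's: one shows that the canonical projection $[0,1]\to(\mathbf{m}_\infty,D)$ and $[0,1]\to(\mathbf{m}_\infty,D^*)$ induce the same identifications by checking that for a.e. pair $(s,t)$ one has $D(s,t)>0$ whenever $s\not\simeq t$, and this follows from the fact that $\mathbf{m}_\infty$ equipped with $D$ has the same (positive, finite) mass and that a $1$-Lipschitz surjection cannot strictly increase mass, forcing $D=D^*$ on a full-measure set and then everywhere by continuity.

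The main obstacle, and the step I expect to require the most care, is exactly this argument that $D$ cannot collapse points that $D^*$ distinguishes. The inequality $D\le D^*$ is soft, but promoting it to equality genuinely uses the fine geometry of the Brownian map developed in \cite{uniqueness}, in particular the description of the cut locus and the fact that geodesics from the distinguished point $a_*$ are determined by the label (snake) values via \eqref{loiD}. Everything else — tightness, the passage to the limit giving \eqref{boundDDzero} and \eqref{loiD}, and the triangle-inequality bookkeeping — is routine and already in place; the real content is importing and adapting the identification $\{D^*=0\}=\{D^0=0\}$ from \cite{uniqueness} and verifying that the scaling constants appearing in Theorem \ref{cvgcontourlabel} do not affect the validity of those arguments, which they do not since they concern only the common law of the driving Brownian snake.
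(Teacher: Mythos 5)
Your central reduction contains a genuine gap: the claim that ``to upgrade $D\le D^*$ to equality it suffices to prove that $D(a,b)=0$ implies $D^*(a,b)=0$'' is false. Two pseudo-metrics with the same zero set need not coincide: $\frac{1}{2}D^*$ vanishes exactly where $D^*$ does, is dominated by $D^*$, and induces the same quotient space and the same topology, yet it is not $D^*$. For the same reason, none of the tools you invoke can ever pin down the metric: equality of zero sets, homeomorphism of compact quotients, Hausdorff dimension, or preservation of mass under a $1$-Lipschitz surjection all constrain at most the topology and the measure of $(\mathbf{m}_{\infty},D)$, never the distances themselves. (The classical rigidity statement that a $1$-Lipschitz surjection of a compact metric space onto itself is an isometry requires the \emph{same} metric on source and target, which is precisely what is in question here.) So the proposal cannot close as written, no matter how carefully the cut-locus and $\{D^*=0\}=\{D^0=0\}$ facts from \cite{uniqueness} are imported.

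The argument of \cite[Section 8.3]{uniqueness}, which the paper follows and which you say you would reproduce verbatim, has a different mechanism: it works with laws, not with zero sets. By continuity it suffices to prove $D(p_{\mathbf{e}}(X),p_{\mathbf{e}}(Y))=D^*(p_{\mathbf{e}}(X),p_{\mathbf{e}}(Y))$ a.s.\ for $X,Y$ independent and uniform on $[0,1]$, and since $D\le D^*$ pointwise it is enough that these two random variables have the same \emph{distribution}. The law of $D^*(p_{\mathbf{e}}(X),p_{\mathbf{e}}(Y))$ is that of $Z_X-\min\{Z_s:0\le s\le 1\}$ by \cite[Corollary 7.3]{uniqueness}. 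The law of $D(X,Y)$ is then identified by a discrete re-rooting argument: replacing the root edge of $\mathcal{M}_n^{\bullet}$ by the edge attached to the corner $i_n=\lfloor nX\rfloor$ produces a map with the same (uniform) law, in which the corner $k_n$ corresponding to $j_n=\lfloor nY\rfloor$ is uniform and independent of the re-rooted map; since re-rooting moves each marked vertex by graph distance at most $1$, one gets $|d_n(i_n,j_n)-d'_n(0,k_n)|\le 2$ together with $d'_n(0,k_n)\stackrel{(d)}{=}d_n(0,i_n)$, whence in the limit $D(X,Y)\stackrel{(d)}{=}D(0,X)=Z_X-\min\{Z_s:0\le s\le 1\}$ by \eqref{loiD}. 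One concludes with the elementary observation that $U\le V$ a.s.\ together with $U\stackrel{(d)}{=}V$ forces $U=V$ a.s. Your proposal contains none of these three ingredients --- the two-point law from \cite[Corollary 7.3]{uniqueness}, the re-rooting invariance of the uniform map, and the domination-plus-equality-in-law trick --- and they are exactly what makes the lemma true.
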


The statement of the theorem easily follows from the lemma. Indeed, we introduce a correspondence between the 
metric spaces $(V(\mathcal{M}_n^{\bullet}) \setminus \{\partial\}, 2^{-1/4} n^{-1/4} d_{\text{gr}}^{\mathcal{M}_n^{\bullet}})$ and  $(\mathbf{m}_{\infty}, D^*)$ by setting
$$\mathcal{R}_n=\{(v^n_{\lfloor nt \rfloor}, \Pi(p_{\mathbf{e}}(t))) : t \in [0,1] \}.$$
From the (almost sure) convergence \eqref{cvgtriplet}, and the equality $D=D^*$, we easily get that the distortion of $\mathcal{R}_n$
tends to $0$ as $n\to\infty$  along the sequence $(n_k)_{k \geq 1}$. It follows that the random metric space $(V(\mathcal{M}_n^{\bullet}) \setminus \{\partial\}, 2^{-1/4} n^{-1/4} d_{\text{gr}}^{\mathcal{M}_n^{\bullet}})$ converges a.s. to  $(\mathbf{m}_{\infty}, D^*)$ as $n\to\infty$ along the sequence $(n_k)_{k \geq 1}$, in the Gromov-Hausdorff sense.
Clearly, this convergence still holds if we replace $V(\mathcal{M}_n^{\bullet}) \setminus \{ \partial \}$ by $V(\mathcal{M}_n^{\bullet})$.
The previous discussion shows that from every sequence of integers going to infinity, we can extract a subsequence along which the convergence stated in
 \eqref{cvgcartepointee} holds. This suffices to complete the proof of   \eqref{cvgcartepointee}.

It only remains to prove Lemma \ref{DegalDetoile}. 

\subsection{Proof of Lemma \ref{DegalDetoile}}
Here we follow closely \cite[Section 8.3]{uniqueness}.
By a continuity argument, it is enough to show that if $X$ and $Y$ are two independent random variables uniformly distributed over $[0,1]$, which are also independent of 
the sequence $(\mathcal{M}_n^{\bullet})_{n\geq 1}$ and of the triplet $(\mathbf{e}, Z, D)$, we have
$$ D(p_{\mathbf{e}}(X), p_{\mathbf{e}}(Y))= D^*(p_{\mathbf{e}}(X), p_{\mathbf{e}}(Y)) \ \text{a.s}.$$ 
Since one already knows that $$ D(p_{\mathbf{e}}(X), p_{\mathbf{e}}(Y))\leq D^*(p_{\mathbf{e}}(X), p_{\mathbf{e}}(Y)),$$
it is enough to prove that these two random variables have the same distribution. 

First, the distribution of $D^*(p_{\mathbf{e}}(X), p_{\mathbf{e}}(Y))$ can be found in \cite[Corollary 7.3]{uniqueness}:
\begin{equation}
\label{lawD*}
D^*(p_{\mathbf{e}}(X), p_{\mathbf{e}}(Y))\mathop{=}^{(d)} Z_X-\min\{Z_s:0\leq s\leq 1\}.
\end{equation}
We then want to determine the distribution of  $D(p_{\mathbf{e}}(X), p_{\mathbf{e}}(Y))=D(X,Y)$. We set for $n \geq 1$, 
$$i_n= \lfloor nX \rfloor \ , \ j_n=\lfloor nY \rfloor.$$
The random variables $i_n$ and $j_n$ are independent, independent of $\mathcal{M}_n^{\bullet}$ and uniformly distributed over $\{0, \dots, n-1 \}$. As we already explained in
subsection 2.2, every integer between $0$ and $n-1$ corresponds to a corner of a white vertex in the tree $\mathcal{T}_n$, and thus by the BDG bijection to an edge of $\mathcal{M}_n^{\bullet}$. 
We introduce a new planar map $\mathcal{M}_n^{\bullet '}$ in $\mathbf{M}_n^{b \bullet}$  defined by saying that $\mathcal{M}_n^{\bullet '}$ has the same vertices, edges, faces and origin vertex as $\mathcal{M}_n^{\bullet}$, but  a different root edge, which is the edge associated with the corner corresponding to $i_n$ in the BDG bijection between $\mathcal{T}_n$ and $\mathcal{M}_n^{\bullet}$. The orientation of this root edge is chosen with probability $\frac{1}{2}$ among the two possible ones. Since what we have done is just replacing the root edge by another oriented edge chosen uniformly
at random over the $2n$ possible choices, it is easy to see
that the map $\mathcal{M}_n^{\bullet '}$ is also uniformly distributed over $\mathbf{M}_n^{b \bullet}$.

The tree associated with $\mathcal{M}_n^{\bullet '}$ via the BDG bijection is denoted by $\mathcal{T}'_n$. We let $v'^{n}_0, \dots, v'^{n}_n$ be the white contour sequence of $\mathcal{T}'_n$ and 
we also let $d'_n$ be the analog of $d_n$ when $\mathcal{M}_n^{\bullet}$ is replaced by $\mathcal{M}_n^{\bullet '}$. 

Let $k_n \in \{0, \dots, n-1 \}$ be the index of the white corner of $\mathcal{T}'_n$ corresponding via the BDG bijection to the edge of $\mathcal{M}_n^{\bullet}$ starting from the corner $j_n$ in $\mathcal{T}_n$. 
Conditionally on the pair $(\mathcal{M}_n^{\bullet}, \mathcal{M}_n^{\bullet '})$, the latter edge is uniformly distributed over the set of all edges of $\mathcal{M}_n^{\bullet}$ (thus also over the set of all edges of $\mathcal{M}_n^{\bullet '}$). It follows that, conditionally to $(\mathcal{M}_n^{\bullet}, \mathcal{M}_n^{\bullet '})$, the index $k_n$ is uniformly distributed over $\{0, \dots, n-1 \}$, so it is independent of $\mathcal{M}_n^{\bullet '}$. 
From the definition of $\mathcal{M}_n^{\bullet '}$, the vertex $v^n_{i_n}$ is either equal or adjacent to $v'^n_0$ and in a similar way the vertex $v^n_{j_n}$ is either equal or adjacent to $v'^n_{k_n}$. This leads to the bound.
\begin{equation}
\label{majoddprime}
| d_n(i_n,j_n)-d'_n(0,k_n) | \leq 2.
\end{equation}
Moreover we observe that
\begin{equation}
\label{lienddprime}
d'_n(0,k_n)\build{=}_{}^{(d)}d_n(0,i_n)
\end{equation}
because $k_n$ is independent of $\mathcal{M}_n^{\bullet '}$ and uniformly distributed over $\{0, \dots, n-1 \}$, and $i_n$ satisfies the same properties with respect to $\mathcal{M}_n^{\bullet}$.
We now use the a.s. convergence \eqref{cvgtriplet} to get 
\begin{equation}
\label{cvg1}
2^{-1/4} n^{-1/4} d_n(0,i_n) \underset {n \to \infty} {\longrightarrow} D(0,X) = Z_X-\min\{Z_s:0\leq s\leq 1\},
\end{equation}
where the last equality holds by \eqref{loiD}, and 
\begin{equation}
\label{cvg2}
2^{-1/4} n^{-1/4} d_n(i_n,j_n) \underset {n \to \infty} {\longrightarrow} D(X,Y). 
\end{equation}
Both \eqref{cvg1} and \eqref{cvg2} hold a.s along the subsequence $(n_k)_{k \geq 1}$.
On the other hand, \eqref{majoddprime} and \eqref{lienddprime} show that the limit in \eqref{cvg1}
must have the same distribution as the limit in  \eqref{cvg2}, and we get 
$$D(X,Y) \mathop{=}^{(d)} Z_X-\min\{Z_s:0\leq s\leq 1\}.$$
Recalling \eqref{lawD*}, we see that $ D(p_{\mathbf{e}}(X), p_{\mathbf{e}}(Y))$ and $ D^*(p_{\mathbf{e}}(X), p_{\mathbf{e}}(Y))$ have the same distribution, which completes the proof of 
Lemma \ref{DegalDetoile}.

\section{Convergence of rooted maps} % From rooted and pointed maps to rooted maps.
In this section, we derive Theorem \ref{cvgcarte} from the convergence
\eqref{cvgcartepointee} for rooted and pointed maps. 
Notice that similar arguments appear in \cite[Proposition 4]{BJM}. 
As previously, $\mathcal{M}_n^{\bullet}$ is uniformly distributed over $\mathbf{M}_n^{b \bullet}$,
but it will be sometimes be convenient to view $\mathcal{M}_n^{\bullet}$ as a random element
of $\mathbf{M}_n^{b}$, just by ``forgetting'' the distinguished vertex. In particular, if
$F$ is a function on $\mathbf{M}_n^{b}$, the notation $F(\mathcal{M}_n^{\bullet})$ means that
we apply $F$ to the rooted map obtained by forgetting the distinguished vertex of
$\mathcal{M}_n^{\bullet}$. Similarly, we will write $\mu_n^{\bullet}$ for the law of
$\mathcal{M}_n^{\bullet}$ viewed as a random element
of $\mathbf{M}_n^{b}$. The notation $\mu_n$ will then stand for the law of $\mathcal{M}_n$, that is,
the uniform probability measure on  $\mathbf{M}^{b}_n$.
Let $\Vert . \Vert$ stand for the total variation norm. In order to get Theorem \ref{cvgcarte} from \eqref{cvgcartepointee}, it is sufficient to prove the following result.

\begin{proposition}
\label{variationtotale}
The following convergence holds. 
$$ \Vert \mu_n- \mu_n^{\bullet} \Vert \underset{n \to \infty} {\longrightarrow} 0.$$
\end{proposition}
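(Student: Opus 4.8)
The plan is to recognize $\mu_n^{\bullet}$ as the measure $\mu_n$ biased by the number of vertices, and then to prove that this number concentrates. The forgetful map $\mathbf{M}_n^{b \bullet} \to \mathbf{M}_n^b$ has, above each $M$, a fiber of cardinality $\#V(M)$ (the distinguished vertex may be any vertex), so $\#\mathbf{M}_n^{b\bullet} = \sum_{M \in \mathbf{M}_n^b} \#V(M)$ and, for every $M \in \mathbf{M}_n^b$,
$$\mu_n^{\bullet}(M) = \frac{\#V(M)}{\#\mathbf{M}_n^b\,\overline{v}_n}, \qquad \mu_n(M) = \frac{1}{\#\mathbf{M}_n^b}, \qquad \overline{v}_n := E_{\mu_n}[\#V].$$
Consequently
$$2\,\Vert \mu_n - \mu_n^{\bullet}\Vert = E_{\mu_n}\!\left[\left|1 - \frac{\#V}{\overline{v}_n}\right|\right] \leq \frac{\sqrt{\mathrm{Var}_{\mu_n}(\#V)}}{\overline{v}_n}$$
by the Cauchy--Schwarz inequality. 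Moreover the biasing identity $E_{\mu_n^{\bullet}}[f(\#V)] = \overline{v}_n^{-1} E_{\mu_n}[\#V\,f(\#V)]$, applied to $f=\mathrm{id}$ and $f=1/\mathrm{id}$, gives both $\overline{v}_n = 1/E_{\mu_n^{\bullet}}[1/\#V]$ and $\mathrm{Var}_{\mu_n}(\#V)/\overline{v}_n^{\,2} = E_{\mu_n^{\bullet}}[\#V]\,E_{\mu_n^{\bullet}}[1/\#V] - 1$. Hence it suffices to prove that $E_{\mu_n^{\bullet}}[\#V]\cdot E_{\mu_n^{\bullet}}[1/\#V] \to 1$.

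Next I would transfer the question to the coding tree. Under $\mu_n^{\bullet}$ the map is coded by the labeled tree $(\mathcal{T}_n,(\ell_n(u)))$, whose white vertices are exactly the vertices of $\mathcal{M}_n^{\bullet}$ distinct from $\partial$; thus $\#V(\mathcal{M}_n^{\bullet}) = N_0 + 1$, where $N_0$ is the number of white vertices of $\mathcal{T}_n$. By Proposition \ref{loiarbre}, $\mathcal{T}_n$ is the two-type Galton--Watson tree conditioned on $\{N = n\}$, i.e. on $\{\tau = n+1\}$. Since $N_0 = 1 + \sum_{b \text{ black}} k_{\mathcal{T}_n}(b)$ and the total increment of $Y$ over its $N+1$ steps is $Y_{N+1}-Y_0=-1 = (N_0-1) - \#\{\text{down-steps}\}$, the number of white vertices equals the number of down-steps of $Y$. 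Through the identity $(Y_0,\dots,Y_{N+1}) \overset{(d)}{=} (S_0,\dots,S_\tau)$, the conditional law of $N_0$ given $\{N=n\}$ is therefore that of the number of $(-1)$-increments of $S$ before time $\tau$, conditioned on $\{\tau = n+1\}$ (equivalently, by the Remark following the definition of $\nu$, the number of leaves of the one-type critical Galton--Watson tree with offspring law $\mu(k)=\nu(k-1)$ conditioned to have $n+1$ vertices).

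I would then establish the concentration of $N_0$. Let $\xi_1,\xi_2,\dots$ be i.i.d.\ with law $\nu$ and $D_m = \#\{i\le m:\xi_i=-1\}$. The number of $(-1)$-increments is invariant under cyclic permutation of $\xi_1,\dots,\xi_{n+1}$, so the cycle lemma identifies the conditional law of $N_0$ with the law of $D_{n+1}$ under the bridge conditioning $\{\xi_1+\cdots+\xi_{n+1}=-1\}$, an event of probability $P_1(S_{n+1}=0) \sim (\sigma\sqrt{2\pi})^{-1} n^{-1/2}$ by \eqref{thlimlocfeller} (equivalently by \eqref{Kemperman} and \eqref{estimationpopu}). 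As $D_{n+1}$ is $\mathrm{Binomial}(n+1,\tfrac23)$ and this conditioning sits at the center of the joint Gaussian fluctuations of $(D_{n+1},\sum_i\xi_i)$, a bivariate local limit theorem yields $N_0/n \to 2/3$ in probability, whence $E_{\mu_n^{\bullet}}[\#V]/n \to 2/3$ by bounded convergence (recall $\#V \le n+1$). For the reciprocal moment I would write $n\,E_{\mu_n^{\bullet}}[1/\#V] = E[n/\#V]$ and split according to $\{N_0 \ge n/3\}$: on this event $n/\#V \le 3$ and $n/\#V \to 3/2$ in probability, so bounded convergence gives the contribution $3/2$; the complementary contribution is at most $n\,P(N_0 < n/3)$, and a standard binomial large-deviation bound $P(D_{n+1}\le \tfrac13 n)\le e^{-cn}$ together with the lower bound on the bridge probability gives $P(N_0 < n/3)\le \sqrt{n}\,e^{-cn}$, so this term is $O(n^{3/2}e^{-cn})\to 0$. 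Thus $E_{\mu_n^{\bullet}}[1/\#V]\sim \tfrac{3}{2n}$, and multiplying with $E_{\mu_n^{\bullet}}[\#V]\sim \tfrac23 n$ yields the limit $1$, which completes the proof.

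The only genuine difficulty I foresee is the control of the reciprocal moment $E_{\mu_n^{\bullet}}[1/\#V]$, equivalently of the lower tail of the number of white vertices: configurations with atypically few vertices are precisely those that could spoil the size-biasing comparison, and Chebyshev's inequality (which only gives $P(N_0 < n/3)=O(1/n)$) is too weak. This is why an exponential large-deviation estimate for $D_{n+1}$, transported through the cycle lemma and combined with the polynomial lower bound $P_1(S_{n+1}=0)\gtrsim n^{-1/2}$, is the essential ingredient.
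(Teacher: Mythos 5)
Your proof is correct, and it reaches the same size-biasing identity as the paper but then diverges in two genuine ways. First, the reduction: you bound the total variation distance by $\sqrt{\mathrm{Var}_{\mu_n}(\#V)}/\overline{v}_n$ via Cauchy--Schwarz and convert everything, through the biasing identities, into the single statement $E_{\mu_n^{\bullet}}[\#V]\,E_{\mu_n^{\bullet}}[1/\#V]\to 1$; the paper instead keeps the test function $F$ explicit and reduces to the $L^1$ convergence $1/X_n\to 1$ under the pointed law, where $X_n=(2n/3)^{-1}\,\mathrm{Card}\,V(\mathcal{M}_n^{\bullet})$ (Lemma \ref{cvgL1}). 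These reductions are essentially equivalent in strength: both ultimately require $P(\#V<\varepsilon n)=o(1/n)$ under $\mu_n^{\bullet}$, because of the factor $n$ coming from the crude bound $\#V\geq 1$, and you correctly identify that Chebyshev is too weak here --- that is the real point of the proposition. Second, the concentration mechanism: the paper (Lemma \ref{majoexpo}) works in a forest of independent two-type trees, applies Cram\'er's theorem to the i.i.d.\ numbers of black children of successive white vertices, performs a union bound over the possible values of $N_0$, and divides by $P(N=n)\sim c\,n^{-3/2}$ from \eqref{estimationpopu}; you instead observe that $N_0$ is exactly the number of $(-1)$-increments of the walk $S$ killed at $\tau$ --- hence, unconditionally, a plain $\mathrm{Binomial}(n+1,2/3)$ count --- and use the cycle lemma to trade the first-passage conditioning $\{\tau=n+1\}$ for the bridge conditioning $\{S_{n+1}=0\}$, whose probability of order $n^{-1/2}$ again beats the Chernoff bound. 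Your identification is arguably cleaner (a genuinely binomial variable, no union bound), at the price of invoking the cycle lemma; note though that the cycle lemma is dispensable, since conditioning directly on $\{\tau=n+1\}$ and using $P(\tau=n+1)\sim c\,n^{-3/2}$ costs only an extra factor $n$, which the exponential bound absorbs, and likewise the bivariate local limit theorem you invoke for $N_0/n\to 2/3$ can be replaced by the same exponential-over-polynomial argument applied to both tails.
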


\begin{proof}
We have $$\Vert \mu_n- \mu_n^{\bullet} \Vert = \frac{1}{2} \underset{-1 \leq F \leq 1} {\sup} |E(F(\mathcal{M}_n))-E(F(\mathcal{M}_n^{\bullet})) |,$$
where the supremum is over all functions $F:\mathbf{M}_n^{b \bullet} \longrightarrow [-1,1]$.
The quantity $E(F(\mathcal{M}_n^{\bullet}))$ can be expressed in terms of $E(F(\mathcal{M}_n))$ as
$$E(F(\mathcal{M}_n^{\bullet}))= \frac{E(F(\mathcal{M}_n) \text{Card}\, V(\mathcal{M}_n))}{E(\text{Card}\,V(\mathcal{M}_n))},$$
which implies 
\begin{equation}
\label{E.}
E(F(\mathcal{M}_n))= E\left(\frac{F(\mathcal{M}_n^{\bullet})}{\text{Card} \,V(\mathcal{M}_n^{\bullet})}\right) \frac{1}{E(1/\text{Card} \,V(\mathcal{M}_n^{\bullet}))}.
\end{equation}
We then need an estimate of $\text{Card}\,V(\mathcal{M}_n^{\bullet})$, which is given by the next lemma.

\begin{lemma}
\label{majoexpo}
Let $\delta >0$. There exists a positive constant $C_{\delta}$ such that 
$$ P \left(\left|{\rm Card}\, V(\mathcal{M}_n^{\bullet})- \frac{2n}{3}\right| > \delta n  \right)\leq \exp(-C_{\delta}n)$$ for all $n$ sufficiently large. 
\end{lemma}
\begin{proof}
We start by observing that the number $\text{Card}\, V(\mathcal{M}_n^{\bullet})$ corresponds via the BDG bijection to ($1$ plus) the number of white vertices of a two-type Galton-Watson tree with offspring distributions $(\mu_0, \mu_1)$ given by Proposition \ref{loiarbre}, conditioned to have $n$ edges.  

Let us consider a sequence of independent two-type Galton-Watson trees with offspring distributions $(\mu_0, \mu_1)$.
Suppose that the white vertices of these trees are listed in lexicographical order for each tree, one tree
after another,  and write $A_1, A_2,\dots$ for the respective numbers of black children of 
 the white vertices in this enumeration. Then $A_1,A_2,\ldots$ are i.i.d random variables with distribution $\mu_0$, and we recall that $\mu_0$ is a geometric distribution with mean $\frac{1}{2}$. We can apply Cramer's theorem to get the exponential bound, for every $n\geq 1$,
\begin{equation}
\label{majoexpo1}
  P\left(\left| \frac{A_1+\dots+A_{n}}{n}-\frac{1}{2} \right| \geqslant \delta \right) \leqslant \exp(-K_{\delta}n)
\end{equation} 
where $K_\delta$ is a positive constant. 

Let $N_0$ and $N_1$ be respectively the numbers of white and black vertices
in the first tree in our sequence, and let $N=N_0+N_1-1$, which is the number of edges of this tree.
The point now is the fact that if we condition on the event $\{N=n\}$,  the planar map
associated with the first tree becomes uniform on $\mathbf{M}_n^{b \bullet}$. Since this planar map
has  $N_0+1$ vertices, the result of the lemma will follow if we can prove that, for $n$ sufficiently large,
$$P\Big[\Big| N_0-\frac{2}{3}(n+1) \Big| > \delta(n+1)\,\Big| \, N=n\Big]\leq \exp(-C_\delta n)$$
for some positive constant $C_\delta$.

Recall from \eqref{estimationpopu}   that $n^{3/2}P(N=n) {\longrightarrow}({\sigma\sqrt{2\pi}}) ^{-1}$
as $n\to\infty$. Therefore the preceding exponential bound will follow if we can verify that
for all $n$ large enough,
$$P\Big[\Big\{\Big| N_0-\frac{2}{3}(n+1) \Big| > \delta(n+1)\,\Big|\Big\} \cap \{N=n\}\Big]\leq \exp(-c_\delta n)$$
with some positive constant $c_\delta$.

We first observe that the event $\mathcal{E}_1:=\lbrace  N_0-\frac{2}{3}(n+1)>\delta(n+1) \rbrace \cap \left\lbrace N=n \right\rbrace$ 
is contained in $$\left\lbrace n+1 \geqslant N_0 > \left(\frac{2}{3}+\delta \right)(n+1) \right\rbrace \cap \left\lbrace \frac{N_1}{N_0} < \frac{\frac{1}{3}-\delta}{\frac{2}{3}+\delta} \right\rbrace.$$
Therefore if we set  $a_{\delta}=(\frac{1}{3}-\delta)/(\frac{2}{3}+\delta) <\frac{1}{2}$, the event
$\mathcal{E}_1$ may only hold if, for some $k$ such that $(\frac{2}{3}+\delta)(n+1)<k\leq n+1$, 
the first $k$ white vertices of our sequence of trees have less than $a_\delta k$ black children.
Using \eqref{majoexpo1}, we obtain that
$$P(\mathcal{E}_1)\leq \sum_{(\frac{2}{3}+\delta)(n+1)<k\leq n+1} \exp(-K'_\delta k) \leq \exp(-c'_\delta n)$$
for some positive constants $K'_\delta$ and $c'_\delta$. Similar arguments give an analogous
exponential bound for the probability of the event
$\mathcal{E}_2:=\lbrace N_0-\frac{2}{3}(n+1)< -\delta(n+1) \rbrace \cap \lbrace N=n \rbrace$. This completes the proof of the lemma.
\end{proof}

Set $X_n=(2n/3)^{-1}\,\text{Card}\, V(\mathcal{M}_n^{\bullet})$ for every $n\geq 1$.

\begin{lemma}
\label{cvgL1}
The random variables $X_n^{-1}$ converge to $1$ in $L^1$ when $n$ tends to infinity.
\end{lemma}

\begin{proof}
First, as $\text{Card}\,V(\mathcal{M}_n^{\bullet}) \geq 1$, we have $X_n^{-1} \leq \frac{2n}{3}$. Let $\delta>0$.
The event $\lbrace |X_n^{-1}-1 | > \delta \rbrace$ is contained in $\lbrace X_n < \frac{1}{2} \rbrace \cup \lbrace |X_n-1 | > \frac{\delta}{2} \rbrace$. This leads to 
$$ E(|X_n^{-1}-1|) \leq \delta + E(|X_n^{-1}-1| \mathbf{1}_{\lbrace |X_n^{-1}-1| > \delta \rbrace}) \leq \delta+\frac{2n}{3}P\Big(|X_n-1|> \frac{\delta}{2}\wedge \frac{1}{2}\Big).$$ 
Hence, by Lemma \ref{majoexpo},
$$\limsup_{n\to\infty} E(|X_n^{-1}-1|)\leq \delta$$
and the desired result follows since $\delta$ was arbitrary.
\end{proof}

Finally we use \eqref{E.} and Lemma \ref{cvgL1} to get
$$\begin{aligned}
\Vert \mu_n-\mu_n^{\bullet} \Vert
&=\frac{1}{2}\sup_{-1\leq F\leq 1}\left| E \left[F(\mathcal{M}_n^{\bullet}) \left(1- \frac{1}{\text{Card}\,V(\mathcal{M}_n^{\bullet})} \frac{1}{E(1/\text{Card}\, V(\mathcal{M}_n^{\bullet}))}  \right) \right] \right| \\
&\leq  E \left[ \left| 1-\frac{1}{\text{Card}\,V(\mathcal{M}_n^{\bullet})} \frac{1}{E(1/\text{Card}\, V(\mathcal{M}_n^{\bullet}))}  \right| \right] \\
&= E\left[\left| 1- \frac{1/X_n}{E(1/X_n)} \right| \right] \\
& \underset{n \to \infty} {\longrightarrow} 0. \\
\end{aligned}$$
This completes the proof of Proposition \ref{variationtotale}.

\end{proof}

\textbf{Acknowledgement}. I am deeply indebted to Jean-Fran\c cois Le
Gall for suggesting me to study this problem, for stimulating
discussions and for carefully reading the manuscript and making many
useful suggestions.
I am grateful to Jérémie Bettinelli for a very interesting discussion concerning Proposition \ref{variationtotale}. 
I also thank the referee of this article for his careful reading and several helpful suggestions.


\begin{thebibliography}{99}

\bibitem{albenque}
{\sc Addario-Berry, L., Albenque, M.},
The scaling limit of random simple triangulations and 
random simple triangulations. Preprint, availaible at
{\tt arXiv:1306.5227}

\bibitem{quadwithnopending}
{\sc Beltran, J., Le Gall, J.F.},
 Quadrangulations with no pendant vertices.
{\it Bernoulli} {\bf 19} (2013), 1150-1175.

\bibitem{BJM}
{\sc Bettinelli, J., Jacob, E., Miermont, G.} The scaling limit of uniform 
random plane maps, via the Ambj\o rn--Budd 
bijection. {\it Electronic J. Probab.}, to appear, avalaible at
{\tt arXiv:1312.5842}


\bibitem{BDG}
{\sc Bouttier, J., Di Francesco, P., Guitter, E.},
Planar maps as labeled mobiles. {\it Electronic J. Combinatorics}, {11} (2004),
\#R69.

\bibitem{CK}
{\sc Curien, N., Kortchemski, I.}
Percolation on random triangulations and stable looptrees.
Preprint, available at {\tt arXiv:1307.6818}

\bibitem{J} 
{\sc Janson, S., Stef\'ansson, S.O.}, Scaling limits of 
random planar maps with a unique large face.
{\it Ann. Probab.}, to appear, available at {\tt arXiv:1212.5072}

\bibitem{LL}
{\sc Lawler, G.F., Limic, V.}, {\it Random Walk: A Modern Introduction}. Cambridge Studies in Advanced Mathematics, 123. Cambridge University Press, Cambridge, 2010.


\bibitem{randomtrees} {\sc Le Gall, J.F.}, Random trees and applications.
{\it Probab. Surveys}, 2 (2005), 245-311.

\bibitem{topostructure} {\sc Le Gall, J.F.}, 
The topological structure of scaling limits of large planar maps. 
{\it Invent. Math.}, {169} (2007), 621-670.

\bibitem{Ito} {\sc Le Gall, J.F.},
It\^o's excursion theory and random trees. 
{\it Stoch. Process. Appl.} {120} (2010), 721-749.

\bibitem{uniqueness} {\sc Le Gall, J.F.},
Uniqueness and universality of the Brownian map.
{\it Ann. Probab.} 41 (2013), 2880--2960.


\bibitem{MM} {\sc Marckert, J.F., Miermont, G.},  Invariance principles for
random bipartite planar maps. {\it Ann. Probab.}, {35} (2007), 1642-1705.

\bibitem{MaMo}
{\sc  Marckert, J.-F., Mokkadem, A.},
The depth first processes of Galton-Watson trees converge to the same Brownian excursion.
{\it Ann. Probab.}, 31, 1655--1678 (2003)



\bibitem{Mtrees} {\sc Miermont, G.}, 
Invariance principles for spatial multitype Galton-Watson trees. 
{\it Ann. Inst. Henri Poincar\'e} (B), 44 (2008), 1128--1161.

\bibitem{brownianmapmiermont}
{\sc Miermont, G.}
The Brownian map is the scaling limit of uniform random plane quadrangulations.
{\it Acta Math.}, 210 (2013), 319--401.

\bibitem{pitman}
{\sc Pitman, J.}
{\it Combinatorial Stochastic Processes}. {\it Lecture Notes Math.} 1875. 
Springer, Berlin 2006.





\end{thebibliography}
\end{document}